\def\dd{{\rm d}}
\newcommand{\R}{\mathbb{R}}
\newcommand{\PP}{\mathbb{P}}
\newcommand{\F}{\mathcal{F}}
\newcommand{\V}{\mathcal{V}}
\newcommand{\U}{\mathcal{U}}
\newcommand{\I}{\mathcal{I}}
\newcommand{\FF}{\mathbb{F}}
\newcommand{\EE}{\mathbb{E}}
\newtheorem{theorem}{Theorem}
\newtheorem{corollary}{Corollary}
\newtheorem{remark}{Remark}
\newtheorem{lemma}{Lemma}
\newtheorem{example}{Example}
\numberwithin{equation}{section}
\numberwithin{theorem}{section}
\numberwithin{proposition}{section}
\numberwithin{corollary}{section}
\numberwithin{remark}{section}
\numberwithin{lemma}{section}
\numberwithin{definition}{section}
\newenvironment{proof}[1][Proof]{\noindent \textbf{#1.} }{\ \rule{0.5em}{0.5em}}
\begin{document}
\parindent=0pt

\title{Metric regularity  under G\^ateaux differentiability with applications to optimization and stochastic optimal control problems }

\author{
A. Jourani \thanks{Institut de Math\'ematiques de Bourgogne, UMR-CNRS 5584, Universit\'e de Bourgogne, 21078 Dijon, France (Abderrahim.Jourani@u-bourgogne.fr).}  \and F. J.
Silva \thanks{ Institut de recherche XLIM-DMI, UMR-CNRS 7252, Universit\'e de Limoges, 87060 Limoges, France
(francisco.silva@unilim.fr).}}

\maketitle

\begin{abstract}
The main objective of this work is to study the existence of Lagrange multipliers for infinite dimensional problems under G\^ateux differentiability assumptions on the data. Our investigation follows two main steps: the proof of the existence of Lagrange multipliers under a calmness assumption on the constraints and the study of sufficient conditions, which only use the G\^ateaux derivative of the function defining the constraint, that ensure this assumption. 

We apply the abstract results to recover in a direct manner the optimality systems associated to two types of standard stochastic optimal control problems. 
\end{abstract}\smallskip

{\bf Keywords:} Lagrange multipliers, G\^ateaux differentiability, calmness, metric regularity, optimality conditions, stochastic optimal control problems. 
\section{Introduction}

 Consider the following
optimization problem
\begin{eqnarray}\label{PD}
\min \{f(x) \; ; \; \;   g(x) \in D\},
\end{eqnarray}
where $f : X \to  \R $ and $g : X \to Y$ are (for simplicity of the exposition) differentiable mappings, $X$ and $Y$ are Banach spaces and $D\subseteq Y$  is nonempty. In the case where $Y$ is finite dimensional the following result holds for any
closed set $D$ : if $x_0$ is a local solution  to $(P)$, then there are $\lambda \geq 0$ and
$y^* \in N(D, g(x_0))$ such that
\begin{equation}\label{optimality_condition_expected_non_zero_multipliers}(\lambda, y^*) \not = (0, 0), \end{equation}
\begin{equation}\label{optimality_condition_expected}\lambda f'(x_0) + y^* \circ g'(x_0) = 0.\end{equation}
Here $N(D, g(x_0)) $ denotes some normal cone to $D$ at $g(x_0)$ (say, for instance, the  Clarke
normal cone, the  approximate normal cone, etc..). 

The following example proposed by Brokate in \cite[Section 2]{B} shows that the previous  result is no longer true in the infinite dimensional
case. 

\begin{example}\label{Brok}   Let  $X = Y =  \ell^2$ be the Hilbert space of square summable
real sequences.  Denote by $(e_k)_{k\geq 1}$ the canonical orthonormal base of $\ell^2$ and consider the  operator $A :  \ell^2  \to  \ell^2 $  defined by
$$A\left(\sum_{ i\geq 1 } x_i e_i\right) = \sum_{ i\geq 1 } 2^{1-i}x_ie_i.$$
It is easy to check that $A$  is injective but not surjective and  that  the image of $A$, denoted by  $\mbox{{\rm Im}}(A)$, is a proper dense subspace of $ \ell^2$.  As a consequence, the adjoint operator $A^*$ is injectif but not surjectif. Now,  let $x^* \in  \ell^{2} \setminus  {\rm Im}(A^*)$ and  consider  $f = x^*$, $g = A$ and $D =
\{0\}$  as the data for problem \eqref{PD}. Then $0$ is  the  only feasible point and, hence,  the solution of this problem. Moreover, since $x^* \in  \ell^{2} \setminus  {\rm Im}(A^*)$, we easily check that  there is no $(\lambda, y^*) \neq (0, 0)$ satisfying  $(1.4)$.
\end{example}

In infinite dimension, most of the authors have assumed that $D$ is a closed convex cone with
a nonempty interior or  that $D = D_1 \times \{0\}$, where $D_1$ is a closed convex cone with  a 
nonempty interior and $\{0\} \subset  \R ^n$ (see \cite{ELTH,Gl,Ish,Mi,Yu} and references therein). The first result which gives a condition  for the validity of \eqref{optimality_condition_expected_non_zero_multipliers}-\eqref{optimality_condition_expected}  in the
case  where $D$ is closed is  due to Jourani and Thibault \cite{JT1}, where it  is assumed that the system $g(x)
\in D$ is {\it metrically regular}  (see \cite{Dontchev_Rockafellar_metric_regularity,Ioffe_book}   and the references therein for a systematic study of this property). This condition is expressed metrically in terms of $g$ and $D$
and implies that  $\lambda$ can be taken different from  zero. In \cite{J1} it is shown that relations \eqref{optimality_condition_expected_non_zero_multipliers}-\eqref{optimality_condition_expected} 
subsist in the case where $f$ is vector-valued  and $D$ is epi-Lipschitz-like in the sense of
Borwein (see \cite{B1}). In \cite{JT2, JT3}, the authors  gave general conditions ensuring \eqref{optimality_condition_expected_non_zero_multipliers} and \eqref{optimality_condition_expected}.  More
precisely, let $x_0$ be a local solution to problem $(P)$ and suppose that $f$ and $g$ are locally Lipschitz  mappings at $x_0$, with $g$ strongly compactly
Lipschitz at $x_0$ (see \cite{JT1}). Denote by $\partial_A d(u, D)$ the approximate subdifferential of $d(\cdot,D)$ at $u$ (see \cite{I1, I2}) and assume the existence of a
locally compact cone $K^* \subset Y^*$ and a neighbourhood $V$ of $g(x_0)$ such that
$$\partial_Ad(u, D) \subset K^*, \quad \forall \;  u \in V\cap D,$$
or equivalently (see \cite{I3}), $D$ is compactly epi-Lipschitzian in the sense of Borwein-Strojwas \cite{BS}. Then there exists $\lambda \geq 0$ and $y^* \in   \R_ +\partial_Ad(g(x_0), D)$, with
$(\lambda, y^*) \neq (0, 0)$, such that
$$\lambda \partial_A f(x_0) + \partial_A(y^* \circ g)(x_0) \ni 0.$$

Now, in order to ensure the existence of Lagrange multipliers (i.e. $\lambda \neq 0$ in \eqref{optimality_condition_expected}), several qualification conditions have been considered in the literature, including the classical ones as Slater condition, Mangasarian-Fromovitz  condition and so on. In this paper, we are interested in the existence of  Lagrange  multipliers for problem (\ref{PD}), where  the problem  is nonconvex,   the data is G\^ateaux differentiable and the set $D$ is a closed set.  These multipliers are obtained in Theorem \ref{thm2} and in Theorem \ref{thm2_b} under the so-called calmness condition which is a kind of constraint qualification.  Inspired by the work by Ekeland \cite{Ekeland_inverse}, our main results (Theorem \ref{thm3} and Theorem \ref{casDconvexe}) establish the metric regularity property for the constraint system  under G\^ateaux differentiability assumptions only. We point out that the proofs of these results do not rely on any iteration scheme. 

The first application of our results are first order necessary optimality conditions for  stochastic optimal control problems in continuous time. Following the functional framework proposed by Backhoff and Silva in \cite{backhoffsilva14},  our abstract results allow us to recover a weak version of the general stochastic Pontryagin's maximum principle, proved in \cite{Peng90}, under rather general assumptions (see our Remark \ref{comments_result} {\rm(ii)}). As pointed out in \cite{backhoffsilva14}, the main difficulty in deriving this result, from standard variational principles, is that the smoothness of the equality constraint that defines the dynamics of the controlled diffusion process is difficult to check. Our abstract results, which assume only G\^ateaux differentiability of the mapping that defines the constraints and a uniform surjectivity property of the G\^ateaux derivative in a neighbourhood of the optimal solution, allow us to avoid this issue and to establish the existence of Lagrange multipliers and, as a consequence of the characterization of these multipliers studied  in \cite{backhoffsilva14}, the validity of a weak version of Pontryagin's principle. We point that this result is not new and is weaker than the one proved in \cite{Peng90}, which, however, needs strong assumptions on the second order derivatives of the data. On the other hand, the proof presented here is new,  short and clarifies the role of the adjoint states as Lagrange multipliers when the stochastic control problem is formulated in the correct functional framework.

    In the second application, we consider a discrete time stochastic optimal control problem where the randomness is modelled by a multiplicative independent noise. As in the continuous time case,  the main difficulty to apply standard abstract Lagrange multiplier  results comes from the functional equation defining the controlled trajectory. By considering a suitable functional framework for the optimization problem and using our abstract results, we are able to prove in a rather straightforward manner the validity of the optimality system obtained in \cite{LinZhang} under more general assumptions than those imposed in that article.

The paper is organized as follows. In the next section  we set up the notation and recall some standard results in nonsmooth analysis. In Section \ref{optimality_conditions_abstract_form}, we establish the existence of Lagrange multipliers for problem \eqref{PD} under the calmness assumption. Next, in Section \ref{metric_regularity}, we provide sufficient conditions, in terms of the G\^ateaux derivative of $g$, for the metric regularity of the constraint system (which is a stronger property than its calmness). Finally, in Sections \ref{section_stochastic_control_continuous_time} and \ref{section_discrete_time}, we apply these abstract results to the stochastic control problems described in the previous paragraphs.

\section{Notations and preliminaries}

In all the paper $(X, \|\cdot\|_{X})$  and  $(Y, \|\cdot\|_{Y})$  are (real) Banach spaces. The dual spaces of  $X$ and $Y$ are denoted by $X^{\ast}$ and $Y^{\ast}$, respectively, and for $h\in X$ we  set $\langle x^{\ast},h\rangle_{X}:= x^{\ast}(h)$, with an analogous notation for the duality paring between $Y^{\ast}$ and $Y$.  Given $r>0$  and $x\in X$ we denote $B_{X}(x,r):= \{ x'\in X \; ; \; \|x'-x\|_{X} \leq r\}$ the closed ball of radius $r$ centered at $x$. For $A\subseteq X$ we denote by $\mbox{cl}(A)$ and $\mbox{int}(A)$ its closure and its topological interior, respectively. \\
Let us recall some basic notions in nonsmooth analysis  (see e.g. \cite{Clarke90,Bonnans_Shapiro_book,MR2191744} for a detailed account of the theory). Given a locally Lipschitz function $\varphi: X \mapsto \R$,  the directional derivative $\varphi^{\circ}(x; h)$ of $\varphi$ at $x$ in the direction $h\in X$ and the subdifferential $\partial_{C} \varphi(x)$ of $\varphi$ at $x$  are both defined in sense of {\it Clarke}  as 
$$\begin{array}{l}\displaystyle \varphi^{\circ}(x; h):= \limsup_{y\to x,   \tau \downarrow 0} \frac{\varphi(y+\tau h)- \varphi(y)}{\tau}, \\[12pt]
\partial_{C} \varphi(x):= \left\{ x^{\ast} \in X^{\ast} \; ; \;\langle x^{\ast}, h \rangle_{X}  \leq \varphi^{\circ}(x; h) \; \; \forall \; h \in X\right\}. \end{array}$$
Note that for all $x\in X$,  $\varphi^{\circ}(x; \cdot): X \mapsto \R$ is well-defined,  positively homogeneous, subadditive, Lipschitz continuous  and  satisfies that  $\varphi^{\circ}(x; 0)=0$. This implies that  $\varphi^{\circ}(x; \cdot)$ is the support function of $\partial_{C} \varphi(x)$, which is a   nonempty, weak$^\ast$-compact  and convex set  (see \cite[Proposition 2.1.2]{Clarke90}). Given  a nonempty  set   $A\subseteq X$, we denote by $d_{A}(\cdot):= \inf_{x \in A} \|(\cdot) -x\|$ the distance to $A$  function.   Given $x\in  \mbox{cl}(A)$,  the {\it Clarke's}  tangent cone is defined as 
$$
T_{A}(x):= \left\{ h\in X : \quad   \lim_{y \to x, \; y \in A,\;  \tau \to 0^+}  {d_A(y+\tau h) \over \tau} = 0\right\}.
$$
%
If $x\notin \mbox{cl}(A)$ we set $T_{A}(x):=\emptyset$. If $x \in  \mbox{cl}(A)$, we have that $h \in T_{A}(x)$ iff for every sequences $(x_n)$ such that $x_n\in A$, $x_n\to x$, and $\tau_n \to 0^+$ there exists a sequence $h_n \to h$ such that $x_n +\tau_n h_n \in A$ for  all $n$ large enough.  The {\it Clarke's} normal cone  to $A$ at $x$ is defined as $N_{A}(x)= T_{A}(x)^0$, where for a given cone $\mathcal{K}$ we denote by $\mathcal{K}^0$ its negative polar cone, defined as
$$\mathcal{K}^0 : = \{ x^*\in X^*: \quad \langle x^*, h\rangle_{X} \leq 0 \, \hspace{0.2cm}  \forall \;  h\in \mathcal{K}\}.$$
We have  (see e.g.  \cite[Proposition 2.4.2]{Clarke90})
\begin{equation}\label{normalconeanddistance}
\begin{array}{l}
N_{A}(x)=  \mbox{$w^*$-cl}\left( \bigcup_{\lambda \geq 0} \lambda \partial_{C} d_{A}(x)\right),
\end{array}
\end{equation}
where $\mbox{$w^*$-cl}$ denotes the weak-star closure in  $X^{\ast}$.  The  {\it adjacent} (or {\it Ursescu}) tangent cone to $A$ at $x\in \mbox{cl}(A)$ is defined by 
$${\cal T}(A, x) = \left\{ h\in X : \quad   \lim_{\tau \to 0^+}{{d_A(x+\tau h)}\over \tau} = 0\right\}.$$
We set  ${\cal T}(A,x):=\emptyset$ if $x\notin  \mbox{cl}(A)$. By definition,  if $x\in \mbox{cl}(A)$  then $h\in {\cal T}(A,x)$ iff  for any sequence $\tau_n\to 0^+$ there exists a sequence $h_n\to h$ such that $x+\tau_nh_n \in A \hbox{  for all $n$ sufficiently large}$.


Finally, the {\it contingent}  (or {\it Bouligand})  tangent cone to $A$  at $x\in \mbox{cl}(A)$  is defined  as 
 $$K(A, x) := \left\{ h\in X : \quad d^-_A(x  ;  h)  = 0\right\},$$
where $d^-_A(x  ;  h)$ is the lower Dini directional derivative of $d_A$ at $x$ in the direction $h$, that is,
$$d^-_A(x  ;  h) :=\liminf_{\tau \to 0^+} {{d_A(x+\tau h)}\over \tau}.$$
We set  $K(A,x):=\emptyset$ if $x\notin  \mbox{cl}(A)$. By definition, if $x\in \mbox{cl}(A)$  then $h\in K(A, x)$ iff there exist sequences $\tau_n\to 0^+$ and $h_n\to h$ such that  $x+\tau_nh_n \in A \hbox{  for $n$ sufficiently large}$.
 Note that  
$$
T_{A}(x) \subseteq {\cal T}(A, x) \subseteq K(A,x).
$$
If $A$ is convex, then the previous tangent cones coincide. In the general  case these cones are closed, they differ and only $T_{A}(x)$ is guaranteed to be convex.

We say that $A$ is {\it tangentially regular} at $x$ if 
\begin{eqnarray}\label{regT}
K(A, x) = {\cal T}(A, x).
\end{eqnarray}
For later use, we state the following result whose proof can be easily deduced from the previous definitions. 
\begin{lemma}\label{lemmaP} Let $A\subset X$ and $B\subset Y$ be closed sets and let $x_0\in A$ and $y_0\in B$. The space $X\times Y$ is endowed with the product norm, that is, $\Vert (x, y)\Vert_{X\times Y} = \Vert x\Vert_X + \Vert y\Vert_Y$. Then 
\begin{itemize}
\item[{\rm(i)}] $K(A\times B, (x_0, y_0)) \subset K(A, x_0)\times K(B, y_0)$. The equality holds whenever  $A$  is tangentially regular at $x_0$ or   $B$ is tangentially regular at $y_0$. 
\item[{\rm(ii)}] For all $h\in X$ and $k\in Y$, $d^-_{A\times B}((x_0, y_0), (h, k)) \leq d^-_A(x_0, h) + d^0_B(y_0, k)$.
\item[{\rm(iii)}] If $A$  is tangentially regular at $x_0$ or   $B$ is tangentially regular at $y_0$, then for all $h\in X$ and $k\in Y$, 
$$d^-_{A\times B}((x_0, y_0), (h, k)) \leq d_{K(A, x_0)}( h) + d_{K(B, y_0)}( k).$$
\end{itemize}
\end{lemma}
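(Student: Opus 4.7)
The plan rests on a single observation that makes the three statements almost mechanical: since $X\times Y$ is equipped with the product norm $\|(x,y)\|_{X\times Y}=\|x\|_X+\|y\|_Y$, the distance function of a product set decomposes as
$$
d_{A\times B}(x,y) \;=\; d_A(x) + d_B(y).
$$
With this in hand, each item reduces to a standard elementary inequality for $\liminf$ of sums, namely $\liminf_\tau (a_\tau+b_\tau) \le \limsup_\tau a_\tau + \liminf_\tau b_\tau$ whenever the right-hand side is defined.

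For the inclusion in (i), I would start from $(h,k) \in K(A\times B,(x_0,y_0))$ and unfold the sequential characterization of the contingent cone: there exist $\tau_n\to 0^+$ and $(h_n,k_n)\to(h,k)$ such that $(x_0+\tau_n h_n,y_0+\tau_n k_n)\in A\times B$, which immediately gives $h\in K(A,x_0)$ and $k\in K(B,y_0)$. For the reverse inclusion under, say, tangential regularity of $A$, given $h \in K(A,x_0)=\mathcal{T}(A,x_0)$ and $k\in K(B,y_0)$, I would first extract the sequence $\tau_n\to 0^+$ and $k_n\to k$ with $y_0+\tau_n k_n\in B$ provided by $k\in K(B,y_0)$; then, crucially, use $h\in\mathcal{T}(A,x_0)$ \emph{with this specific sequence} $\tau_n$ to find $h_n\to h$ with $x_0+\tau_n h_n\in A$. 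The point is that $\mathcal{T}(A,x_0)$ allows us to choose the approximating sequence for any prescribed $\tau_n$, which is exactly what synchronising the two coordinates requires; this is the obstacle that the Bouligand cone alone does not overcome.

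For (ii), I would apply the product-norm identity and the $\liminf$ inequality above to $a_\tau = d_A(x_0+\tau h)/\tau$ and $b_\tau = d_B(y_0+\tau k)/\tau$, yielding
$$
d^-_{A\times B}((x_0,y_0);(h,k)) \;\le\; \liminf_{\tau\to 0^+} a_\tau \;+\; \limsup_{\tau\to 0^+} b_\tau.
$$
It then remains to observe that $\limsup_{\tau\to 0^+} d_B(y_0+\tau k)/\tau \le d^\circ_B(y_0;k)$, which follows directly from the definition of the Clarke directional derivative evaluated at $y_0\in B$ using $d_B(y_0)=0$.

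For (iii), the extra input supplied by tangential regularity, say of $A$, is to upgrade the $\liminf$ bound $d^-_A(x_0;h)\le d_{K(A,x_0)}(h)$ into a $\limsup$ bound. Indeed, for any $h'\in K(A,x_0)=\mathcal{T}(A,x_0)$, the adjacent-cone definition gives $d_A(x_0+\tau h')/\tau\to 0$, and the $1$-Lipschitz property of $d_A$ yields $d_A(x_0+\tau h)/\tau \le d_A(x_0+\tau h')/\tau + \|h-h'\|_X$, so that taking $\limsup$ and then infimum over $h'\in K(A,x_0)$ produces
$$
\limsup_{\tau\to 0^+}\frac{d_A(x_0+\tau h)}{\tau} \;\le\; d_{K(A,x_0)}(h).
$$
The analogous estimate with $\liminf$ for $B$ requires no regularity. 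Combining these via the same $\liminf(a+b)\le\limsup a+\liminf b$ inequality gives the claimed bound. The delicate point, and the only real obstacle, is precisely that one cannot add two $\liminf$s; tangential regularity of one factor is exactly what lets us convert its $\liminf$ into a $\limsup$ (in fact, a genuine limit) so that the sum inequality becomes usable.
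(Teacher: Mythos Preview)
Your proof is correct. The paper itself does not prove this lemma; it simply states that the result ``can be easily deduced from the previous definitions.'' Your argument supplies exactly those details, and the organising observation $d_{A\times B}(x,y)=d_A(x)+d_B(y)$ under the sum norm is the right one --- with it, each assertion reduces, as you say, to elementary $\liminf/\limsup$ manipulations together with the sequential characterisations of $K$ and $\mathcal{T}$. In particular, your handling of the reverse inclusion in (i) correctly identifies why tangential regularity of \emph{one} factor suffices: the adjacent cone lets you synchronise the $\tau_n$ chosen for the other factor. The only cosmetic remark is that boundedness of $a_\tau=d_A(x_0+\tau h)/\tau\le\|h\|_X$ (and similarly for $b_\tau$) guarantees all the $\liminf/\limsup$ combinations you use are finite, so the inequality $\liminf(a_\tau+b_\tau)\le\liminf a_\tau+\limsup b_\tau$ applies without caveat.
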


\section{Lagrange multipliers for optimization problems  under G\^ateaux differentiability assumptions on the data}\label{optimality_conditions_abstract_form}

This section is concerned with necessary optimality conditions or existence of  Lagrange  multipliers  associated to local solutions of optimization problems of the form
\begin{eqnarray}\label{opt}
\left\{\begin{array}{ll}
\min \; \; \; f(x) \\[4pt]
\mbox{s.t. } \quad g(x) = 0, \quad x\in C,
\end{array}
\right.
\end{eqnarray}
where $f : X \mapsto \mathbb{R}\cup\lbrace +\infty\rbrace$ is function, $g : X\mapsto Y$ is a mapping from a (real) Banach space $(X, \Vert \cdot\Vert_X)$ to a (real) Banach space $(Y,  \Vert \cdot\Vert_Y)$, and $C$ is a  nonempty closed subset of $X$.  

Suppose that $x_0$ is a local solution to problem \eqref{opt}. Let us state now our basic assumptions that will allow us to establish first order optimality conditions at $x_0$. 
\begin{itemize}
\item[${\bf (H_f)}$]$f$ is G\^ateaux differentiable at $x_0$ and locally Lipschitz around $x_0$ with constant $K_f>0$, that is, there exists $r>0$ such that
$$f(x) - f(x') \leq K_f\Vert x - x'\Vert_X \quad \forall \;  x, x'\in B_X(x_0, r).$$
\item[${\bf (H_g)}$] $g$ is  G\^ateaux differentiable at $x_0$.
\end{itemize}

If ${\bf (H_g)}$ holds true, we will denote by $Dg(x_0): X \to Y$ the G\^ateaux derivative and by $D^{\ast}g(x_0): Y^{\ast} \to X^{\ast}$ its adjoint operator. Similar notations will be used for the G\^ateaux derivative of $f$ if ${\bf (H_f)}$ holds.  

We recall that system
\begin{eqnarray}\label{system}
 x\in C \hspace{0.3cm} \mbox{and}  \quad g(x) = 0,
\end{eqnarray}
is said to be  {\it calm} at  $x_0 \in g^{-1}(0) \cap C$ if there exist $a > 0$ and $s > 0$ such that
\begin{eqnarray}\label{calm}
d_{g^{-1}(0)\cap C}(x) \leq a \Vert g(x)\Vert_Y \hspace{0.5cm}\forall \; x\in B_X(x_0, s)\cap C.
\end{eqnarray}

The following result gives existence of Lagrange multipliers for  problem (\ref{opt}) under the calmness condition  \eqref{calm} and the weak differentiability assumptions ${\bf (H_f)}$-${\bf (H_g)}$.
\begin{theorem}\label{thm2}  Suppose that ${\bf (H_f)}$-${\bf (H_g)}$ hold and that system {\rm \eqref{system}} is calm  at $x_0$. Let $K_f$ and $a$ be as in ${\bf (H_f)}$ and {\rm\eqref{calm}},  respectively.   Then, 
\begin{itemize}
\item[{\rm$(i)$}] if $x_0 \in \mbox{{\rm int}}(C)$, then there exists $y^*\in Y^*$, with $\Vert y^*\Vert_{Y^{\ast}} \leq K_f a$,   such that
$$Df(x_0) + D^*g(x_0)y^* = 0.$$
\item[$(ii)$] If  $g$ is locally Lipschitz around $x_0$ with constant $K_g>0$, then
\begin{equation}\label{maximizationh}Df(x_0)h + K_fa\Vert Dg(x_0)h\Vert_{ Y } + K_f(1+K_{g}a)  d^-_C(x_0; h) \geq 0 \quad \forall \; h\in X {\color{blue}.}\end{equation}
In particular,  there exists $y^*\in Y^*$, with $\Vert y^*\Vert_{Y^{\ast}} \leq K_f a$, such that
$$0 \in Df(x_0) + D^*g(x_0)y^* +  N_{C}(x_0).$$
If, in addition,   $K(C, x_0)$ is convex then there exists $y^*\in Y^*$, with $\Vert y^*\Vert_{Y^{\ast}} \leq K_f a$, such that
$$0 \in Df(x_0) + D^*g(x_0)y^* + (K(C,x_0))^0.$$
\end{itemize}
\end{theorem}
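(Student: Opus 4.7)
The strategy I would use is a direction-by-direction penalty argument: for each fixed $h \in X$, perturb $x_0$ along the ray $x_0 + th$ and use the calmness inequality to produce nearby feasible points. This fits naturally with G\^ateaux differentiability, which is itself a ray-wise notion, and avoids any iteration scheme as promised in the introduction.

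For part (i), since $x_0 \in \mathrm{int}(C)$, the perturbation $x_0 + th$ lies in $C$ for $t > 0$ small. Given such $t$, I would invoke the calmness inequality to pick $\bar x_t \in g^{-1}(0) \cap C$ with $\|x_0+th - \bar x_t\|_X \leq a\|g(x_0+th)\|_Y + t^{2}$. By G\^ateaux differentiability of $g$ one has $\|g(x_0+th)\|_Y = O(t)$, so $\bar x_t \to x_0$ and the local minimality of $x_0$ gives $f(x_0) \leq f(\bar x_t)$. Coupling this with the local Lipschitz bound on $f$ between $\bar x_t$ and $x_0 + th$, dividing by $t$ and letting $t \to 0^+$ produces the directional inequality
$$Df(x_0)h + K_f a\,\|Dg(x_0)h\|_Y \geq 0 \quad \forall h \in X.$$
From this, defining $\lambda(Dg(x_0)h) := -Df(x_0)h$ on $\mathrm{Im}(Dg(x_0))$ gives a well-defined linear map (apply the inequality to both $h$ and $-h$) bounded by $K_f a$ in the $Y$-norm; the Hahn--Banach extension theorem then produces the desired $y^{*}$.

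Part (ii) needs one extra projection step, since $x_0 + th$ may escape $C$. I would first pick $\tilde x_t \in C$ within $d_C(x_0+th) + t^{2}$ of $x_0+th$, then apply calmness to $\tilde x_t$ to reach $\bar x_t \in g^{-1}(0)\cap C$. Using the Lipschitz constant $K_g$ to bound $\|g(\tilde x_t)\|_Y$ in terms of $\|g(x_0+th)\|_Y$ and $d_C(x_0+th)$, the triangle inequality gives
$$\|\bar x_t - x_0 - th\|_X \leq (1+K_g a)\, d_C(x_0+th) + a\,\|g(x_0+th)\|_Y + o(t).$$
Reproducing the inequality chain $f(x_0) \le f(\bar x_t) \le f(x_0+th) + K_f\|\bar x_t - x_0-th\|_X$, dividing by $t$ and passing to $\liminf_{t\to 0^+}$ along a sequence realizing $d^-_C(x_0;h)$, delivers \eqref{maximizationh}. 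To convert \eqref{maximizationh} into a multiplier inclusion, I would majorize $d^-_C(x_0;h)$ by the Clarke directional derivative $\varphi^{\circ}(x_0;h)$ of $\varphi := d_C$, so that $-Df(x_0)$ is dominated on $X$ by the continuous sublinear functional $p(h) := K_f a\|Dg(x_0)h\|_Y + K_f(1+K_g a)\, d_C^{\circ}(x_0;h)$. The Moreau--Rockafellar sum rule for continuous sublinear functions then yields
$$-Df(x_0) \in K_f a\, D^{*}g(x_0)(B_{Y^{*}}) + K_f(1+K_g a)\,\partial_C d_C(x_0),$$
and the inclusion $\partial_C d_C(x_0) \subseteq N_C(x_0)$, together with \eqref{normalconeanddistance} absorbing the positive scalar, produces the first inclusion. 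When $K(C,x_0)$ is convex, I would replace $d_C^{\circ}(x_0;h)$ by the sharper majorant $d_{K(C,x_0)}(h)$ (valid because $d^-_C(x_0;\cdot)$ is $1$-Lipschitz and vanishes on $K(C,x_0)$) and use $\partial d_{K(C,x_0)}(0) = B_{X^{*}}\cap(K(C,x_0))^{0}$ to obtain the refined final variant.

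The main obstacle I expect is orchestrating the direction-by-direction argument so that the $o(t)$ terms from the penalty bookkeeping, the $\liminf$ definition of $d^-_C(x_0;h)$, and the pointwise G\^ateaux convergence for $f$ and $g$ all line up to yield a clean inequality valid for \emph{every} $h\in X$; this is delicate because under mere G\^ateaux differentiability the convergence is pointwise rather than uniform on bounded sets. A secondary technical point is verifying that the sum rule for sublinear subdifferentials applies without any constraint qualification, which reduces to the continuity at $0$ of each summand, automatic since $Dg(x_0)$ is a bounded linear operator and $d_C^{\circ}(x_0;\cdot)$ is $1$-Lipschitz.
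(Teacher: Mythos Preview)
Your proposal is correct and follows essentially the same route as the paper. The paper proceeds by invoking Clarke's exact penalty result (Proposition~2.4.3 in \cite{Clarke90}) to pass from the constrained problem to the unconstrained inequality $f(x)+K_fa\Vert g(x)\Vert_Y+K_f(1+K_ga)\,d_C(x)\ge f(x_0)$ and then substitutes $x=x_0+\tau h$; your direction-by-direction construction of feasible points $\bar x_t$ is precisely an unpacking of that penalty argument by hand, and your extraction of $y^*$ via Hahn--Banach (resp.\ the Moreau--Rockafellar sum rule) is the same ``convex subdifferential calculus'' the paper cites. The only cosmetic difference is in the last step: the paper restricts the directional inequality to $h\in K(C,x_0)$ and subdifferentiates the indicator, whereas you majorize $d^-_C(x_0;\cdot)$ by $d_{K(C,x_0)}(\cdot)$; both yield the same inclusion.
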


\begin{proof} Since $x_0$ is a local solution of problem (\ref{opt}) and  $f$ satisfies ${\bf(H_f)}$, by \cite[Proposition 2.4.3]{Clarke90} we have that $x_0$ is  a local  minimum of
$$x\in X \mapsto f(x) + K_f  d_{g^{-1}(0) \cap C}(x).$$
Using the calmness assumption  of  system (\ref{system}), we get that $x_0$ is a local solution  to
\begin{equation}\label{auxiliaryproblem}
\mbox{min } \;   f(x) + K_fa \Vert g(x)\Vert_Y \;  \; \; \mbox{s.t. } \; \; x\in C.
\end{equation}
Now, let us prove assertion ${\rm(i)}$. Since $x_0 \in \mbox{int}(C)$, 
there exists $s>0$ such that
$$f(x) + K_fa \Vert g(x)\Vert_Y \geq f(x_0) \quad \forall \; x\in B_{X}(x_0, s).$$
Let $h\in X$ be arbitrary and choose $\tau >0$  small enough such that $x_0+\tau h \in B_{X}(x_0, s)$. Then
$${{f(x_0+\tau h)- f(x_0)}\over \tau} + K_fa \left\| {{g(x_0+\tau h)-g(x_0)}\over \tau}\right\|_Y \geq 0.$$
Using that $f$ and $g$ are G\^ateaux differentiable  at $x_0$, we get
$$Df(x_0)h + K_fa\Vert Dg(x_0)h\Vert_{Y} \geq 0.$$
This means that the convex function $h\mapsto Df(x_0)h + \Vert Dg(x_0)h\Vert_{Y}$  attains its minimum at $h=0$. Thus, the (convex) subdifferential calculus produces  a  $y^* \in Y^*$, with $\Vert y^*\Vert_{Y^{\ast}} \leq K_fa$,  such that
$$Df(x_0) + D^*g(x_0)y^* = 0.$$

In order to prove assertion ${\rm(ii)}$, note that  since $x_0$ solves locally \eqref{auxiliaryproblem} and $f$ and $g$ are locally Lipschitz at $x_0$, by using   \cite[Proposition 2.4.3]{Clarke90} again, we obtain the existence of  $s>0$ such that
$$f(x) + K_fa \Vert g(x)\Vert_Y + K_f(1+K_{g}a) d_C(x) \geq f(x_0) \quad \forall \;  x\in B_{X}(x_0, s).$$
 Let $h\in X$ be arbitrary and choose a sequence $\tau_n \to 0^+$ such that
 $$d^-_C(x_0  ;  h) = \lim_{n\to +\infty} {{d_C(x_0+\tau_nh)}\over {\tau_n}}.$$
 Then, using the G\^ateaux differentiability of $f$ and $g$, we get 
\begin{equation}\label{inequality_with_d}
Df(x_0)h + K_fa\Vert Dg(x_0)h\Vert_{Y} + K_f(1+K_{g}a)  d^-_C(x_0; h) \geq 0 .\end{equation}
Noting that $d^-_C(x_0;h) \leq d^{\circ}_{C}(x;h)$,
  we obtain
$$Df(x_0)h + K_fa\Vert Dg(x_0)h\Vert_{Y} + K_f(1+ K_ga) d^{\circ}_C(x_0 ; h) \geq 0\quad  \forall \; h\in X,$$
or equivalently the convex function
$$h\in X \mapsto Df(x_0)h + K_fa\Vert Dg(x_0)h\Vert_{Y} + K_f(1+ K_ga) d^{\circ}_C(x_0, h)$$
attains its minimum at $h=0$.  Using that $\partial_{C} d^{\circ}_C(x_0, \cdot)(0)= \partial_{C} d_{C}(x_0)$ and \eqref{normalconeanddistance}, the (convex) subdifferential calculus produces a $y^* \in Y^*$, with $\Vert y^*\Vert_{Y^{\ast}}  \leq K_fa$, such that
$$-Df(x_0) - D^*g(x_0)y^* \in  K_f(1+K_ga)\partial d_C(x_0) \subset  N_{C}(x_0).$$
So that assertion {\rm(ii)} follows. 
Finally, inequality \eqref{inequality_with_d} yields 
$$
Df(x_0)h + K_fa\Vert Dg(x_0)h\Vert_{Y} \geq 0 \quad  \forall \; h\in K(C,x_0).
$$
Thus, if $ K(C,x_0)$ is convex, the last assertion in {\rm(ii)} follows from the convex subdifferential calculus. \hfill \end{proof} 

\vskip 0.5cm

Now consider the following optimization problem
\begin{eqnarray}\label{opt1}
\left\{\begin{array}{ll}
\min \; \; \; f(x) \\[4pt]
\mbox{s.t. } \quad g(x)  \in D, \quad x\in C,
\end{array}
\right.
\end{eqnarray}
\\
and the system
\begin{eqnarray}\label{system1}
\mbox{Find} \quad x\in C, \quad g(x) \in D.
\end{eqnarray}

System \eqref{system1} is said to be calm at $x_0 \in g^{-1}(D)\cap C$ if there exist $a > 0$ and $s > 0$ such that
\begin{eqnarray}\label{calm1}
 \quad  d_{g^{-1}(D)\cap C}(x) \leq a d_D(g(x)) \hspace{0.4cm} \forall \; x\in B_X(x_0, s)\cap C.
\end{eqnarray}

Problem (\ref{opt1}) can be rephrased as follows 

\begin{eqnarray}\label{opt2}
\left\{\begin{array}{ll}
\min \; \; \; \tilde{f}(x, y) \\[4pt]
\mbox{s.t. } \quad \tilde{g}(x, y) = 0, \quad (x, y) \in C\times D,
\end{array}
\right.
\end{eqnarray}
where $\tilde{f}(x, y) = f(x)$ and $\tilde{g}(x, y) = g(x) - y.$  Therefore,  (\ref{opt1})  can be written in the form (\ref{opt}). In the following result, we transfer the calmness property of system (\ref{system1}) to that of system
\begin{eqnarray}\label{system2}
\mbox{Find} \quad (x, y) \in C\times D, \quad \tilde{g}(x, y) = 0,
\end{eqnarray}
where the product space $X\times Y$ is endowed with the norm given by the sum of the norms in $X$ and $Y$.

\begin{lemma}\label{lemmaS} Suppose that $g$ is locally Lipschitz around $x_0$ and set $y_0 := g(x_0)$. Then, the following assertions are equivalent:
\begin{itemize}
\item[{\rm(i)}] The system {\rm(\ref{system1})} is calm at $x_0 \in g^{-1}(D)\cap C$.

\item[{\rm(ii)}] The system {\rm(\ref{system2})} is calm at $(x_0, y_0) \in C\times D$.

\end{itemize}
\end{lemma}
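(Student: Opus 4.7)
The plan is to prove each direction separately, using the local Lipschitz continuity of $g$ to shuttle estimates between the two systems. Throughout, let $K_g$ be a local Lipschitz constant for $g$ around $x_0$, and note that $\tilde g^{-1}(0)\cap(C\times D)=\{(x',y'):x'\in C,\;y'\in D,\;y'=g(x')\}$, so distances to this set in the sum norm have the form $\inf\{\|x-x'\|_X+\|y-g(x')\|_Y : x'\in g^{-1}(D)\cap C\}$.

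For the implication (ii)$\Rightarrow$(i), I start from $x\in B_X(x_0,s)\cap C$ with $s$ small, and for an arbitrary $\varepsilon>0$ pick $y_\varepsilon\in D$ with $\|g(x)-y_\varepsilon\|_Y\leq d_D(g(x))+\varepsilon$. The pair $(x,y_\varepsilon)$ then lies in $C\times D$, and the triangle inequality together with the Lipschitz estimate $\|g(x)-y_0\|_Y\leq K_g\|x-x_0\|_X$ forces $(x,y_\varepsilon)$ into any prescribed neighbourhood of $(x_0,y_0)$ once $s$ and $\varepsilon$ are small. Applying the calmness constant $a$ from (ii) gives some $(x',y')\in \tilde g^{-1}(0)\cap(C\times D)$ with $\|x-x'\|_X\leq \|x-x'\|_X+\|y_\varepsilon-y'\|_Y\leq a(d_D(g(x))+\varepsilon)+\varepsilon$; since $x'\in g^{-1}(D)\cap C$, letting $\varepsilon\downarrow 0$ yields (i) with the same constant $a$.

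For (i)$\Rightarrow$(ii), I take $(x,y)\in B_{X\times Y}((x_0,y_0),s')\cap(C\times D)$ with $s'$ small enough that $x\in B_X(x_0,s)$ and $g$ is $K_g$-Lipschitz on that ball. Since $y\in D$, we have $d_D(g(x))\leq\|g(x)-y\|_Y$, and (i) provides, for any $\varepsilon>0$, an $x'\in g^{-1}(D)\cap C$ with $\|x-x'\|_X\leq a\|g(x)-y\|_Y+\varepsilon$. Using the Lipschitz bound $\|g(x)-g(x')\|_Y\leq K_g\|x-x'\|_X$ and the triangle inequality $\|y-g(x')\|_Y\leq\|y-g(x)\|_Y+\|g(x)-g(x')\|_Y$, the pair $(x',g(x'))\in\tilde g^{-1}(0)\cap(C\times D)$ satisfies $\|x-x'\|_X+\|y-g(x')\|_Y\leq(1+(1+K_g)a)\|g(x)-y\|_Y+(1+K_g)\varepsilon$; letting $\varepsilon\downarrow 0$ yields (ii) with constant $1+(1+K_g)a$.

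The only genuinely delicate point is the neighbourhood bookkeeping in (ii)$\Rightarrow$(i): one must verify that the auxiliary point $(x,y_\varepsilon)$ actually lies in the ball where the calmness of system \eqref{system2} is valid, and this is where the local Lipschitz hypothesis on $g$ is used in an essential way to control $\|y_\varepsilon-y_0\|_Y$ by $\|x-x_0\|_X$ plus a small error. Apart from that, the argument is a routine transfer of distance estimates through the identification $y=g(x)$.
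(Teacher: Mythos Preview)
Your proof is correct and follows essentially the same route as the paper's, arriving at the same calmness constant $1+(1+K_g)a$ for the lifted system. Two small remarks on the bookkeeping: in (i)$\Rightarrow$(ii) you invoke $\|g(x)-g(x')\|_Y\le K_g\|x-x'\|_X$, which also requires $x'$ to lie in the Lipschitz ball---this holds because $\|x-x'\|_X\le d_{g^{-1}(D)\cap C}(x)+\varepsilon\le\|x-x_0\|_X+\varepsilon$, and the paper makes this step explicit; conversely, the paper dispatches (ii)$\Rightarrow$(i) in one line via the trivial inequality $d_{\tilde g^{-1}(0)\cap(C\times D)}(x,y)\ge d_{g^{-1}(D)\cap C}(x)$, while you (correctly) unpack the approximation $y_\varepsilon\in D$ and the neighbourhood check that the paper leaves implicit.
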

\begin{proof}  For notational convenience, we omit the subscripts for the norms $\|\cdot\|_{X}$ and  $\|\cdot\|_{Y}$. 
 ${\rm(i)} \Rightarrow {\rm(ii)}$: Since the system (\ref{system1}) is calm at $x_0 \in g^{-1}(D)\cap C$ and $g$ is locally Lipschitz around $x_0${\color{blue},} there exist $ a> 0$, $s>0$ and $K_g>0$ such that 
$$   d_{g^{-1}(D)\cap C}(x) \leq a d_D(g(x)) \hspace{0.5cm} \forall \; x\in B_X(x_0, 3s)\cap C, $$
and
$$  \Vert g(x) - g(x')\Vert \leq K_g\Vert x-x'\Vert \hspace{0.5cm} \forall \; x, \; x'\in B_X(x_0, 3s).$$
Let $(x, y) \in B((x_0, y_0), s)\cap \left( C\times D\right)$. For all $t \in ]0, s[$ there exists $u\in g^{-1}(D)\cap C$ such that 
$$\Vert x-u\Vert \leq  d_{g^{-1}(D)\cap C}(x)  + t \leq \Vert x-x_0\Vert + t \leq 2s,$$
and this asserts that $u\in B(x_0, 3s)\cap \left(g^{-1}(D) \cap C\right)$. Thus,
$$ \Vert g(x) - g(u)\Vert  \leq K_g\Vert x-u\Vert. $$
We have 
\begin{equation}\label{inequality_d_tilde}
d_{\tilde{g}^{-1}(0)\cap  (C\times D)}(x, y) = \inf_{v\in C\cap g^{-1}(D)} [\Vert x -v \Vert + \Vert y-g(v)\Vert ] \leq \Vert x -u\Vert + \Vert y-g(u)\Vert,
\end{equation} and using the triangle inequality, we get 
\begin{align*}
 \Vert y-g(u)\Vert  + \Vert x -u\Vert & \leq \Vert y-g(x)\Vert + \Vert g(x)-g(u)\Vert + \Vert x -u\Vert\\[4pt]
 &  \leq  \Vert y-g(x)\Vert + (1+ K_g)\Vert x-u\Vert\\[4pt]
 &  \leq \Vert y-g(x)\Vert +  (1+ K_g) d_{g^{-1}(D)\cap C}(x)  + t(1+ K_g)\\[4pt]
 & \leq \Vert y-g(x)\Vert +  (1+ K_g) a \Vert y - g(x)\Vert  + t(1+ K_g)\\[4pt]
 &\leq (1+a(1+K_g)) \Vert y - g(x)\Vert  + t(1+ K_g)\\[4pt]
& = (1+a(1+K_g)) \Vert \tilde{g}(x, y)\Vert  + t(1+ K_g).
\end{align*}
As $t$ is arbitrary,  relation \eqref{inequality_d_tilde} yields
$$\forall \; (x, y) \in B((x_0, y_0), s)\cap  (C\times D), \quad d_{\tilde{g}^{-1}(0)\cap  (C\times D)}(x, y) \leq  (1+a(1+K_g)) \Vert \tilde{g}(x, y)\Vert,$$
which implies that {\rm(ii)} holds. The implication  ${\rm(ii)} \Rightarrow {\rm(i)}$ is obvious since the following inequality holds true for all $x\in X$ and $y\in Y$
$$d_{\tilde{g}^{-1}(0)\cap  (C\times D)}(x, y) \geq d_{g^{-1}(D)\cap C}(x).$$
\end{proof} 

The following theorem, which is a consequence of Theorem \ref{thm2}, Lemma \ref{lemmaP} and Lemma  \ref{lemmaS},  gives the existence of KKT multipliers for  problem (\ref{opt1}) under the calmness condition and the weak differentiability assumptions ${\bf(H_f)}$-${\bf(H_g)}$.
\begin{theorem}\label{thm2_b}  Let $x_0$  be a local solution  to   problem  {\rm\eqref{opt1}}  and suppose that system {\rm \eqref{system1}} is calm  at $x_0$. Suppose that ${\bf(H_f)}$ and ${\bf(H_g)}$ hold and  that $g$ is locally Lipschitz around $x_0$. Then
\begin{itemize}
\item[{\rm(i)}] There exists $y^*\in N_{D}(g(x_0))$, with $\Vert y^*\Vert_{Y^{\ast}}  \leq K_f  (1+a(1+K_g))$  {\rm(}where $K_f$, $K_g$ and $a$ are as in ${\bf(H_f)}$, ${\bf(H_g)}$  and {\rm(\ref{calm1})}, respectively{\rm)}, such that
$$ -Df(x_0) - D^*g(x_0)y^* \in   N_{C}(x_0).$$
\item[{\rm(ii)}] Moreover, if $K(C, x_0)$ and $K(D, g(x_0))$ are convex and $C$  is tangentially regular at $x_0$ or   $D$ is tangentially regular at $g(x_0)$, then there exists $y^*\in (K(D, g(x_0)))^0$  such that  $\Vert y^*\Vert_{Y^{\ast}}  \leq K_f (1+a(1+K_g))$ and 
$$0\in Df(x_0) +D^*g(x_0)y^* + (K(C, x_0))^0.$$
\end{itemize}
\end{theorem}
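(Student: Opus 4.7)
The plan is to reduce Theorem \ref{thm2_b} to Theorem \ref{thm2} via the reformulation \eqref{opt2}, with $\tilde f(x,y) = f(x)$, $\tilde g(x,y) = g(x) - y$, and ambient constraint set $C \times D \subset X \times Y$ endowed with the product norm $\Vert (x,y)\Vert_{X\times Y} = \Vert x\Vert_X + \Vert y\Vert_Y$. First I would verify that the hypotheses transfer. Setting $y_0 := g(x_0)$, the map $\tilde f$ is G\^ateaux differentiable at $(x_0, y_0)$ with $D\tilde f(x_0, y_0)(h,k) = Df(x_0)h$ and locally Lipschitz with the same constant $K_f$; similarly $\tilde g$ is G\^ateaux differentiable at $(x_0, y_0)$ with $D\tilde g(x_0, y_0)(h,k) = Dg(x_0)h - k$, and locally Lipschitz with constant $K_{\tilde g} = 1 + K_g$. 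By Lemma \ref{lemmaS} (and tracking the constant produced in its proof), system \eqref{system2} is calm at $(x_0, y_0)$ with constant $\tilde a = 1 + a(1+K_g)$.

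For assertion (i), I would apply Theorem \ref{thm2}(ii) to the reformulated problem, obtaining some $y^* \in Y^*$ with $\Vert y^*\Vert_{Y^*} \leq K_f \tilde a = K_f(1 + a(1+K_g))$ such that
\begin{equation*}
0 \in D\tilde f(x_0, y_0) + D^*\tilde g(x_0, y_0) y^* + N_{C\times D}(x_0, y_0).
\end{equation*}
A direct computation yields $D\tilde f(x_0,y_0) = (Df(x_0), 0)$ and $D^*\tilde g(x_0,y_0) y^* = (D^*g(x_0) y^*, -y^*)$. Since the Clarke tangent cone always satisfies $T_C(x_0) \times T_D(y_0) \subset T_{C\times D}(x_0, y_0)$, taking negative polars and using the identity $(K_1 \times K_2)^0 = K_1^0 \times K_2^0$ gives $N_{C\times D}(x_0, y_0) \subset N_C(x_0) \times N_D(y_0)$. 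Splitting the displayed inclusion into its two coordinates then produces $y^* \in N_D(y_0)$ together with $-Df(x_0) - D^*g(x_0) y^* \in N_C(x_0)$, which is (i).

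For assertion (ii), I would instead invoke the last assertion of Theorem \ref{thm2}(ii), which requires the contingent cone $K(C\times D, (x_0, y_0))$ to be convex. This is exactly where Lemma \ref{lemmaP}(i) enters: under the tangential regularity of $C$ at $x_0$ or of $D$ at $y_0$ one has $K(C\times D, (x_0, y_0)) = K(C, x_0) \times K(D, y_0)$, and convexity of each factor makes the product convex. Repeating the adjoint computation and using $(K_1 \times K_2)^0 = K_1^0 \times K_2^0$ in the resulting inclusion, one reads off the two coordinates to obtain $y^* \in (K(D, y_0))^0$ and $0 \in Df(x_0) + D^*g(x_0) y^* + (K(C, x_0))^0$, with the same norm bound.

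The main obstacle is essentially bookkeeping: carefully tracking the calmness constant through Lemma \ref{lemmaS} and confirming the product inclusion and identity for the Clarke tangent cone and the contingent cone respectively (the latter genuinely requiring tangential regularity, as supplied by Lemma \ref{lemmaP}). No new iterative or variational argument is needed at this point; all the analytic work has already been carried out in Theorem \ref{thm2}, Lemma \ref{lemmaP} and Lemma \ref{lemmaS}.
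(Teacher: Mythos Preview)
Your proposal is correct and follows essentially the same route as the paper: reduce to problem \eqref{opt2}, transfer the calmness constant via Lemma \ref{lemmaS}, apply Theorem \ref{thm2}(ii), and split coordinates using the product structure of the relevant cones (with Lemma \ref{lemmaP}(i) supplying $K(C\times D,(x_0,y_0))=K(C,x_0)\times K(D,y_0)$ for part (ii)). The only cosmetic difference is that for (ii) the paper restates inequality \eqref{maximizationh} on the contingent cone and redoes the convex subdifferential step, whereas you invoke the packaged last assertion of Theorem \ref{thm2}(ii) directly; these are the same argument.
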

\begin{proof}
Since $x_0$ solves \eqref{opt1} locally, $(x_0, g(x_0))$ is a local solution to problem \eqref{opt2}. Using that the constant $a$ satisfies \eqref{calm1}, the proof of Lemma \ref{lemmaS} shows that the calmness constant associated to system \eqref{system2} is given by $(1+a(1+K_g))$. Applying the second assertion in Theorem \ref{thm2}{\rm(ii)} to problem  \eqref{opt2},  yields the first assertion {\rm(i)}. In order to prove assertion {\rm(ii)}, note that \eqref{maximizationh}  implies that 
$$Df(x_0)h + K_f (1+a(1+K_g))\Vert Dg(x_0)h-k\Vert_{Y} \geq 0 \quad \forall \; (h,k) \in K(C\times D, (x_0,g(x_0)).$$
By Lemma \ref{lemmaP} we have that $K(C\times D, (x_0,g(x_0))= K(C,x_0) \times K(D, g(x_0))$, which is a convex set. The result then follows from standard convex analysis calculus. 
\end{proof}
\section{Metric regularity  under G\^ateaux differentiability}\label{metric_regularity}


In this section, we first provide a sufficient condition for a stronger property than the calmness of system \eqref{system}, namely its metric regularity (see \cite{Dontchev_Rockafellar_metric_regularity,Ioffe_book} and the references therein). Then, and as in the previous section, we deduce the corresponding sufficient condition for system \eqref{system1} by reducing it to an instance of system \eqref{system} (see \eqref{system2}).   

For system \eqref{system}, the sufficient condition is given by the {\it constraint qualification} ${ \bf(H_{cq})}$ below. In the remainder of this article, given a subset $A$ of a real Banach space $(Z, \| \cdot \|_{Z})$, $y\in A$ and $r>0$, we set $B_{A}(y,r):= B_{Z}(y,r) \cap A$.  \smallskip 

Throughout this section, we fix a point $x_0 \in g^{-1}(0) \cap C$. We consider the following constraint qualification condition on a neighbourhood of $x_0$. 
\begin{itemize}
\item[${ \bf(H_{cq})}$] there exist $\alpha > 0$ and $r>0$ such that $g$ is continuous and G\^ateaux differentiable on $B_{C}(x_0,r)$ and 
\begin{eqnarray}\label{open}
B_Y(0, 1) \subset Dg(x)\big(B_{ K(C,x)}(0, \alpha) \big)\quad \forall \;  x\in  B_{C}(x_0,r).
\end{eqnarray}
\end{itemize}
\begin{remark} For each $x\in B(x_0,r)$ consider a right-inverse $G(x): Y \rightrightarrows X$ of $Dg(x)$, i.e. $Dg(x)G(x)y= \{y\}$ for all $y\in Y$  {\rm(}we know that such right-inverse exists because \eqref{open} implies that $Dg(x)$ is surjective{\rm)}. Then, assumption \eqref{open} can be rephrased in terms of $G$ as follows
$$
\sup_{x\in B_{X}(x_0,r), \;  y\in B_{Y}(0,1)} \; \;  \inf_{v \in G(x)y \cap K(C,x)}\|v\|_{X} \leq \alpha. 
$$

\end{remark}
The main result of our article is the following.
\begin{theorem}\label{thm3} Suppose that  $({\bf H_g})$ and ${ \bf(H_{cq})}$ hold true and let $\alpha>0$ and $r>0$ be such that \eqref{open} is satisfied.  Then, for all $r_1>0$ and $r_2>0$, with $r_1+r_2=r$, and all 
$$(x, y) \in D_{r_1,r_2}:=\left\{ (u, v) \in  B_{C}(x_0,r_1)\times Y: \, \Vert g(u)-v\Vert_{Y} < {{r_2}\over\alpha}\right\},$$
we have
\begin{eqnarray}\label{regm}
d_{g^{-1}(y)\cap C}(x) \leq \alpha\Vert g(x) - y\Vert_Y.
\end{eqnarray}
\end{theorem}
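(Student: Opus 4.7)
Fix $(x,y) \in D_{r_1, r_2}$. If $g(x) = y$ then $x \in g^{-1}(y)\cap C$ and the inequality is trivial, so assume hereafter that $g(x) \neq y$. Following the strategy of Ekeland \cite{Ekeland_inverse} for inverse-function theorems alluded to in the introduction, I would apply Ekeland's variational principle \emph{once} to the continuous nonnegative functional $\phi(u) := \|g(u) - y\|_Y$ on the complete metric space $\mathcal{S} := C \cap B_X(x_0, r)$, and then show, via $(H_{cq})$, that the resulting Ekeland point $\bar{x}$ must in fact be a zero of $\phi$.

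Concretely, I would pick $\alpha' > \alpha$ close enough to $\alpha$ that $\alpha' \|g(x) - y\|_Y < r_2$, which is possible because the defining inequality of $D_{r_1, r_2}$ is strict. Ekeland's principle with $\varepsilon := \|g(x) - y\|_Y$ and $\lambda := \alpha' \|g(x) - y\|_Y$ then produces $\bar{x} \in \mathcal{S}$ satisfying $\phi(\bar{x}) \leq \phi(x)$, $\|\bar{x} - x\|_X \leq \alpha' \|g(x) - y\|_Y$, and
\[
\phi(u) \geq \phi(\bar{x}) - \frac{1}{\alpha'}\|u - \bar{x}\|_X \qquad \forall\, u \in \mathcal{S}.
\]
The bound on $\|\bar{x} - x\|_X$, together with $\|x - x_0\|_X \leq r_1$ and $\alpha' \|g(x) - y\|_Y < r_2$, yields $\|\bar{x} - x_0\|_X < r$, so $\bar{x}$ lies in the interior of $B_X(x_0, r)$; in particular, every $u \in C$ sufficiently close to $\bar{x}$ lies in $\mathcal{S}$ and may serve as a competitor in the Ekeland inequality.

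I would then argue by contradiction that $\phi(\bar{x}) = 0$. Assuming $v := g(\bar{x}) - y \neq 0$, set $k := -v/\|v\|_Y \in B_Y(0,1)$. Since $\bar{x} \in B_C(x_0, r)$, $(H_{cq})$ produces $h \in K(C, \bar{x})$ with $\|h\|_X \leq \alpha$ and $Dg(\bar{x})\,h = k$, and the definition of the contingent cone furnishes sequences $\tau_n \to 0^+$ and $h_n \to h$ with $u_n := \bar{x} + \tau_n h_n \in C$ for $n$ large; by the preceding paragraph $u_n \in \mathcal{S}$ for such $n$. A direct expansion along the \emph{fixed} direction $h$ using G\^ateaux differentiability of $g$ at $\bar{x}$ gives the first-order estimate
\[
\|g(\bar{x} + \tau_n h) - y\|_Y = \bigl\|v(1 - \tau_n/\|v\|_Y) + o(\tau_n)\bigr\|_Y = \phi(\bar{x}) - \tau_n + o(\tau_n).
\]
Propagating this estimate to $u_n$ and inserting it into the Ekeland inequality would give $\phi(\bar{x}) - \tau_n + o(\tau_n) \geq \phi(\bar{x}) - (\tau_n/\alpha')\|h_n\|_X$, hence $\|h_n\|_X/\alpha' \geq 1 - o(1)$, and in the limit $\|h\|_X \geq \alpha' > \alpha$, contradicting $\|h\|_X \leq \alpha$. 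Therefore $g(\bar{x}) = y$, so $\bar{x} \in g^{-1}(y) \cap C$ and $d_{g^{-1}(y) \cap C}(x) \leq \|\bar{x} - x\|_X \leq \alpha' \|g(x) - y\|_Y$; letting $\alpha' \downarrow \alpha$ delivers \eqref{regm}.

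The main obstacle is precisely this ``propagation'' step: G\^ateaux differentiability furnishes the first-order expansion of $g$ only along a fixed direction, whereas the Ekeland inequality has to be tested at the perturbed points $u_n = \bar{x} + \tau_n h_n$ with $h_n \neq h$. I would address this through the decomposition
\[
g(u_n) - g(\bar{x}) - \tau_n Dg(\bar{x})\,h = \bigl[g(\bar{x} + \tau_n h_n) - g(\bar{x} + \tau_n h)\bigr] + \bigl[g(\bar{x} + \tau_n h) - g(\bar{x}) - \tau_n Dg(\bar{x})\,h\bigr],
\]
the second bracket being $o(\tau_n)$ directly from G\^ateaux differentiability, while the first requires an $o(\tau_n)$ control. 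Since only continuity of $g$ on $B_C(x_0, r)$ is at hand and $\|\tau_n(h_n - h)\|_X$ is merely $\tau_n \cdot o(1)$, closing this gap is the delicate core of the argument; it is likely addressed by selecting $u_n$ more carefully via the distance-based characterisation $d^-_C(\bar{x}; h) = 0$ of the contingent cone, combined with a slight tightening of the Ekeland step so that the residual error is absorbed into the strict comparison with slope $1/\alpha'$.
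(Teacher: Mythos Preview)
Your proposal follows essentially the same route as the paper's proof: apply Ekeland's variational principle on $B_C(x_0,r)$ to $\phi(u)=\|g(u)-y\|_Y$ with a parameter $\alpha'>\alpha$ (the paper's $\beta$), show the Ekeland point $\bar x$ lies in the open ball $\|\cdot-x_0\|_X<r$, and derive a contradiction from $(H_{cq})$ by choosing $h\in K(C,\bar x)$ with $Dg(\bar x)h=-v/\|v\|_Y$ and testing the Ekeland inequality along a contingent-cone sequence $u_n=\bar x+\tau_n h_n\in C$. The contradiction $-1\ge -\alpha/\alpha'$ and the passage $\alpha'\downarrow\alpha$ are exactly as in the paper.

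Regarding the ``propagation'' step you single out as the main obstacle: the paper does \emph{not} go through your decomposition. It simply writes $u_n=\bar u+\tau_n v+o(\tau_n)$ and then asserts, ``since $g$ is G\^ateaux differentiable at $\bar u$, we have $g(u_n)=g(\bar u)+\tau_n Dg(\bar u)v+\tau_n\varepsilon(\tau_n)$ with $\varepsilon(\tau_n)\to 0$'', and proceeds directly to the subdifferential inequality for $\|\cdot\|_Y$. In other words, the paper treats the passage from $\bar u+\tau_n v$ to $\bar u+\tau_n v_n$ as immediate and does not supply the extra control you are looking for; your concern about this point is therefore not resolved by additional machinery in the paper's argument.
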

\begin{proof}  The proof  is inspired from  \cite{Ekeland_inverse}.  Fix $(x, y) \in D_{r_1,r_2}$.  If $y=g(x)$ then  \eqref{regm} is trivial, so let us assume that  $y\neq g(x)$. Consider the function $h: X\mapsto \mathbb{R}$ defined as
$$h(u) := \Vert g(u) - y\Vert_{Y}.$$
Let $\beta > \alpha$  be such that $0<h(x)=\Vert g(x) - y\Vert_{Y} < {{r_2}\over\beta}$. As $h$ is continuous and bounded from below on the closed set $B_C(x_0, r)$ and, evidently,  
$$h(x)\leq  \inf_{x' \in B_{C}(x_0,r)} h(x') +   h(x),$$
  Ekeland's variational principle (see \cite[Theorem 1.1]{Ekeland_variational_principle}) gives the existence of  $\bar{u}\in B_C(x_0, r)$ such that
\begin{align}\label{ineq1}
h(\bar{u})  & \leq h(x),
\\[6pt]
\label{ineq2}
\Vert \bar{u} - x\Vert_{X} &\leq  \beta h(x),
\\[1pt]
\label{ineq3}
h(\bar{u}) & \leq h(u) + {1\over \beta}\Vert \bar{u} - u\Vert_{X}\quad  \forall  \; u\in B_C(x_0, r).
\end{align}
Inequality \eqref{ineq2}  and the choice of $x$ and $\beta$  imply that 
\begin{eqnarray}\label{ineq4}
\Vert \bar{u} - x\Vert_{X} < r_2 \; \;  \hbox{  and so } \; \;  \Vert \bar{u} - x_0\Vert_{X} \leq \Vert \bar{u}-x\Vert_{X}+\Vert x-x_0\Vert_{X} < r_{2}+r_{1}=r.
\end{eqnarray}
{\it Claim: we have that $y = g(\bar{u})$}. Let us assume for a moment that the claim is true. By (\ref{ineq2}), we obtain
 $$d_{g^{-1}(y)\cap C}(x) \leq \beta\Vert g(x) - y\Vert_Y,$$
 and, as $\beta>\alpha$ is arbitrary, we get  that    (\ref{regm}) holds true. 
 
It remains to prove the claim.   Suppose the contrary and define 
$$w=\frac{y-g(\bar{u})}{\|y-g(\bar{u}) \|_{Y}}.$$ 
Since  $\bar{u}\in B_C(x_0, r)$,  assumption ${ \bf(H_{cq})}$ implies the existence of  $v \in B_{K(C,\bar{u})}(0,\alpha)$ such that 
$$
w= Dg(\bar{u})v.
$$
Since  $v \in B_{K(C,\bar{u})}(0,\alpha)$, there exist sequences $\tau_n \to 0^+$ and $v_n\to v$ such that 
$$u_n:= \bar{u}+ \tau_n v_n \in C \hbox{  for $n$ sufficiently large}.$$
We may write  $u_n = \bar{u} + \tau_n   v + o(\tau_n) \in  C$, where $\displaystyle \lim_{n\to +\infty}{{o( \tau_n )}\over { \tau_n}} = 0$. Note that the second inequality in  (\ref{ineq4}) implies that $u_n \in B_C(x_0, r)$ for $n$ sufficiently large.
Now, using inequality (\ref{ineq3}), we get
\begin{equation}\label{inequalityforh}h(\bar{u}) \leq h(u_n) + {1\over \beta}\Vert  \tau_n  v+ o(\tau_n)\Vert_{X}.
\end{equation}
On the other hand, since $g$ is G\^ateaux differentiable at $\bar{u}$, we have
$$g(u_n) = g(\bar{u}) +  \tau_n Dg(\bar{u})v +  \tau_n \varepsilon( \tau_n ), \hbox{  where  } \lim_{n\to +\infty}\varepsilon( \tau_n ) = 0,$$
which, combined with \eqref{inequalityforh}, ensures that
$${{\Vert g(\bar{u}) - y +  \tau_n Dg(\bar{u})v +  \tau_n   \varepsilon( \tau_n )\Vert_{Y} -  \Vert g(\bar{u}) - y\Vert}_{Y}\over { \tau_n }} \geq -{1\over \beta}\left\Vert v+{{o( \tau_n )}\over { \tau_n }} \right\Vert_{X}.$$
Since 
$$
\lim_{n\to +\infty}{{\Vert g(\bar{u}) - y + \tau_n  Dg(\bar{u})v \Vert_{Y} -  \Vert g(\bar{u}) - y\Vert}_{Y}\over { \tau_n }}= \max_{ y^{\ast}  \in \partial \| \cdot \|_{ Y }(g(\bar{u}) - y)} \langle y^{\ast}, Dg(\bar{u})v\rangle_{Y},
$$
we get the existence of $y^{\ast}_{v} \in  \partial \| \cdot \|(g(\bar{u}) - y)$, such that 
\begin{equation}\label{ineq5}
-1=\langle y_v^{\ast}, w\rangle_{Y}=\langle y_v^{\ast}, Dg(\bar{u})v\rangle_{Y}  \geq   -{1\over \beta}\Vert v\Vert_{X}\geq -\frac{\alpha}{\beta},
\end{equation}
where the first equality follows from  the fact that we are assuming that $g(\bar{u}) \neq y$ and the standard relation 
$$y^{\ast}_{v} \in  \partial \| \cdot \|_{Y}(g(\bar{u}) - y) \Leftrightarrow \; \Vert y_v^{\ast} \Vert_{Y^\ast}=1 \hspace{0.2cm} \mbox{and} \hspace{0.2cm} \langle  y_v^{\ast},g(\bar{u}) - y\rangle_Y = \Vert g(\bar{u}) - y\Vert_{Y}. \hspace{0.2cm}    $$

Since \eqref{ineq5} contradicts $\alpha<\beta$, the claim follows. 
\end{proof} \vspace{0.2cm}

The  previous result  extends the following inverse function theorem result, proved first in \cite[Theorem 2]{Ekeland_inverse} in the case $C=X$. 
\begin{corollary}\label{cor2} Suppose that the  assumptions of Theorem \ref{thm3} are satisfied. Then,
\begin{equation}\label{metric_regularity_x_0_fixed}
d_{ g^{-1}(y)\cap C}(x_0) \leq \alpha\Vert y\|_{Y}  \quad \forall \;  y\in Y, \hbox{  with  } \Vert y\Vert_Y < {r\over \alpha}.
\end{equation}
Consequently, for all $y\in Y$, with $\Vert y\Vert_Y < {r\over \alpha}$,  and for all $\beta > \alpha$ there exists $x\in  g^{-1}(y)\cap C$   such that
\begin{equation}\label{two_distances_to_x_0}
\Vert x-x_0\Vert_X < r, \quad \Vert x-x_0\Vert_X \leq \beta \Vert y\Vert_Y.
\end{equation}
\end{corollary}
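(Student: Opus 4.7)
The plan is to derive both assertions directly from Theorem \ref{thm3}, with no new argument beyond careful bookkeeping of the strict inequalities defining $D_{r_1,r_2}$.

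For the first estimate \eqref{metric_regularity_x_0_fixed}, I would apply Theorem \ref{thm3} at the specific point $x=x_0$ (trivially in $B_{C}(x_0,r_1)$ for any $r_1>0$). Since $x_0 \in g^{-1}(0)\cap C$, we have $g(x_0)=0$, so the quantity $\|g(x_0)-y\|_Y$ appearing in the definition of $D_{r_1,r_2}$ reduces to $\|y\|_Y$. The assumption $\|y\|_Y < r/\alpha$ is strict, so one can choose $r_1>0$ small enough and set $r_2 := r-r_1$ such that $\|y\|_Y < r_2/\alpha$ still holds; this places $(x_0,y)\in D_{r_1,r_2}$. The conclusion \eqref{regm} of Theorem \ref{thm3} then reads $d_{g^{-1}(y)\cap C}(x_0)\leq \alpha \|y\|_Y$, which is precisely \eqref{metric_regularity_x_0_fixed}.

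For the second assertion, I would simply unpack the infimum defining the distance in \eqref{metric_regularity_x_0_fixed}. If $y=0$, then $x=x_0$ works and both inequalities in \eqref{two_distances_to_x_0} are immediate. Otherwise, setting $d:=d_{g^{-1}(y)\cap C}(x_0)$, the already established bound gives $d \leq \alpha \|y\|_Y$, which is strictly less than both $\beta \|y\|_Y$ (since $\beta>\alpha$ and $\|y\|_Y>0$) and $r$ (since $\alpha\|y\|_Y<r$ by hypothesis). Thus one can pick $x\in g^{-1}(y)\cap C$ with $\|x-x_0\|_X$ close enough to $d$ to fulfil both $\|x-x_0\|_X \leq \beta\|y\|_Y$ and $\|x-x_0\|_X < r$ simultaneously.

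The only non-automatic step is the translation of the strict hypothesis $\|y\|_Y < r/\alpha$ into the strict inequality required for membership in $D_{r_1,r_2}$; this is what forces the introduction of an arbitrarily small $r_1 > 0$ in the split $r = r_1 + r_2$. Apart from this minor point, the corollary is a direct specialization of Theorem \ref{thm3} at $x_0$ followed by an application of the definition of the distance to a set.
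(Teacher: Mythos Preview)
Your argument is correct and follows essentially the same route as the paper: apply Theorem~\ref{thm3} at $x=x_0$ with a split $r=r_1+r_2$ chosen so that the strict hypothesis $\|y\|_Y<r/\alpha$ survives as $\|y\|_Y<r_2/\alpha$, and then extract $x$ from the definition of the infimum. Your handling of \eqref{two_distances_to_x_0} is in fact slightly cleaner than the paper's, which picks $x_\beta$ for the second inequality first and then argues the first inequality via a case split on the size of $\beta$; your observation that $d<\min\{\beta\|y\|_Y,\,r\}$ lets you choose a single $x$ satisfying both bounds at once.
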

\begin{proof} By Theorem \ref{thm3}, in order to prove \eqref{metric_regularity_x_0_fixed} it suffices to choose $\varepsilon>0$ such that $(x_{0},y) \in D_{\varepsilon, r-\varepsilon}$, which is possible because of the strict inequality in  \eqref{metric_regularity_x_0_fixed}. It remains to prove that \eqref{two_distances_to_x_0} holds for $\beta>\alpha$ and $\| y\|_{Y} <  r/\alpha$. In this case, the first inequality in \eqref{metric_regularity_x_0_fixed} becomes strict and we get the existence of $x_{\beta} \in   g^{-1}(y)\cap C$   such that the second inequality in  \eqref{two_distances_to_x_0} holds true.

Since there exists $\varepsilon>0$ such that $\|y\|_{Y} \leq (r-\varepsilon)/\alpha$ then the first inequality in \eqref{two_distances_to_x_0}  holds for $x_\beta$ provided that  $\alpha <\beta <\alpha r/(r-\varepsilon)$. If $\beta \geq \alpha r/(r-\varepsilon)$ then \eqref{two_distances_to_x_0} holds for $x_{\beta'}$ with $\beta' \in ]\alpha, \alpha r/(r-\varepsilon)[$ and so $\|x_{\beta'}-x_0\|_{X} \leq \beta'\|y\|_{Y} \leq \beta \|y\|_{Y}$. The result follows. 
\end{proof} \vspace{0.2cm}

Now, we study the corresponding metric regularity property for system \eqref{system1}.  We consider the following qualification condition: 
\begin{itemize}
\item[${ \bf(H_{cq}')}$] there exist $\alpha_1, \alpha_2> 0$ and $r>0$ such that $g$ is continuous and G\^ateaux differentiable on $B_{C}(x_0,r)$ and  
\begin{equation}\label{open1}
\begin{array}{l}
B_Y(0, 1) \subset Dg(x)\big(B_{K(C,x)}(0, \alpha_1) \big)-B_{K(D,y)}(0, \alpha_2)\\[6pt]
\hspace{3cm} \forall \; (x,y) \in B_{C \times D}( (x_0,g(x_0)), r).
\end{array}
\end{equation}
\end{itemize}  
\begin{theorem}\label{casDconvexe} Suppose that $({\bf H_f})$, $({\bf H_g})$ and ${ \bf(H_{cq}')}$ hold true and that at least one of the sets $C$ and $D$ is convex.  Denote $\alpha= \max\{\alpha_1,\alpha_2\}$. Then, for all $r_1>0$ and $r_2>0$, with $r_1+r_2=r$, and all 
$$(x, y) \in D_{r_1,r_2}:=\left\{ (u, v) \in  B_{C}(x_0,r_1)\times Y: \, d_{B_{D}(g(x_{0}), r_1)}(g(u)-v) < {{r_2}\over\alpha}\right\},$$
we have
$$
d_{g^{-1}(D+y)\cap C}(x) \leq \alpha  d_{B_{D}(g(x_{0}), r_1)}(g(x)-y).
$$
\end{theorem}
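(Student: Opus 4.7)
The plan is to mimic the reduction used in Lemma \ref{lemmaS} and Theorem \ref{thm2_b}: introduce the auxiliary mapping $\tilde g(x,v) := g(x) - v$ from $X\times Y$ to $Y$, so that system \eqref{system1} becomes the equality-type system \eqref{system2} on the product set $C\times D \subset X\times Y$ (equipped with the sum norm). The heart of the argument is then to verify that the triple $(\tilde g, C\times D, (x_0,g(x_0)))$ satisfies the hypotheses of Theorem \ref{thm3}, and to unwind the resulting metric regularity estimate back to $(g,C,D)$.

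First, I would establish that $({\bf H_{cq}})$ holds for $\tilde g$ on $C\times D$ at $(x_0,g(x_0))$. Its G\^ateaux derivative is $D\tilde g(x,v)(h,k) = Dg(x)h - k$, so the decomposition provided by $({\bf H_{cq}'})$ reads, for any $z\in B_Y(0,1)$, $z = Dg(x)h - k = D\tilde g(x,v)(h,k)$ with $h\in B_{K(C,x)}(0,\alpha_1)$ and $k\in B_{K(D,v)}(0,\alpha_2)$. Since at least one of $C,D$ is convex (hence tangentially regular at every point), Lemma \ref{lemmaP}(i) yields the equality $K(C\times D,(x,v)) = K(C,x)\times K(D,v)$, and with the product norm $\|(h,k)\|_{X\times Y} \leq \alpha_1+\alpha_2$. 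This produces $({\bf H_{cq}})$ for $\tilde g$ with an effective constant of the order of $\alpha$ and the same neighbourhood radius $r$.

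Next, I would apply Theorem \ref{thm3} to this reformulated system. Given $(x,y)\in D_{r_1,r_2}$ and $\varepsilon>0$, I would select a near-minimizer $v^\ast\in B_D(g(x_0),r_1)$ of $d_{B_D(g(x_0),r_1)}(g(x)-y)$, so that $(x,v^\ast)\in C\times D$ lies at product-norm distance at most $2r_1$ from $(x_0,g(x_0))$ and $\|\tilde g(x,v^\ast)-y\|_Y$ is controlled by $d_{B_D(g(x_0),r_1)}(g(x)-y)+\varepsilon$. After splitting the radius $r$ coming from $({\bf H_{cq}'})$ appropriately (into $r_1',r_2'$ with $r_1'\geq 2r_1$ and $r_2'/\alpha$ large enough to dominate $\|g(x)-v^\ast-y\|_Y$), Theorem \ref{thm3} delivers
\begin{equation*}
d_{\tilde g^{-1}(y)\cap(C\times D)}(x,v^\ast) \leq \alpha\,\|g(x)-v^\ast-y\|_Y.
\end{equation*}
Finally, a direct inspection gives $\tilde g^{-1}(y)\cap(C\times D)=\{(x',g(x')-y):\ x'\in g^{-1}(D+y)\cap C\}$, and since $\|(x,v^\ast)-(x',g(x')-y)\|_{X\times Y}\geq \|x-x'\|_X$, we obtain $d_{g^{-1}(D+y)\cap C}(x)\leq d_{\tilde g^{-1}(y)\cap(C\times D)}(x,v^\ast)$. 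Chaining the inequalities and letting $\varepsilon\to 0$ produces the claim.

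The main obstacle I expect is the bookkeeping of the constants and radii: relating the split $r_1,r_2$ in the statement to an admissible split $r_1',r_2'$ with $r_1'+r_2'=r$ inside Theorem \ref{thm3} for $\tilde g$, while remembering that passing to $X\times Y$ with the sum norm inflates product-ball radii by a factor of up to $2$ and merges $(\alpha_1,\alpha_2)$ into a single effective constant of size $\alpha_1+\alpha_2$, which the statement absorbs into $\alpha=\max\{\alpha_1,\alpha_2\}$ up to a harmless multiplicative factor. The remaining computation — verifying that the hypothesis $(x,y)\in D_{r_1,r_2}$ is precisely what is needed to place $(x,v^\ast)$ in the admissible set for Theorem \ref{thm3} applied to $\tilde g$ — is routine once the equivalence of norms on $X\times Y$ and the identification of $\tilde g^{-1}(y)\cap (C\times D)$ above have been carried out.
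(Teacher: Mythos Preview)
Your approach is exactly the paper's: introduce $\tilde g(x,v)=g(x)-v$, check that $({\bf H_{cq}})$ holds for $\tilde g$ on $C\times D$ using the convexity hypothesis (via the product identity for the contingent cone), apply Theorem~\ref{thm3} to the pair $\big((x,v^\ast),y\big)$ with $v^\ast$ an $\varepsilon$-near metric projection of $g(x)-y$ onto $B_D(g(x_0),r_1)$, and then drop the second coordinate and let $\varepsilon\to 0$.

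The only point where your write-up diverges from the paper is the bookkeeping you flag at the end, and it is not quite ``harmless'': with the sum norm on $X\times Y$ you obtain the effective constant $\alpha_1+\alpha_2$ (not $\max\{\alpha_1,\alpha_2\}$) and you need $\|(x,v^\ast)-(x_0,g(x_0))\|\le 2r_1$, which forces $r_1'=2r_1$ and hence the extra restriction $r_1<r/2$. Neither matches the statement as written. The clean way out---and what makes the paper's constants come out exactly as stated---is to equip $X\times Y$ with the max norm for this argument. Then $(h,k)\in B_{K(C,x)}(0,\alpha_1)\times B_{K(D,v)}(0,\alpha_2)$ gives $\|(h,k)\|\le\max\{\alpha_1,\alpha_2\}=\alpha$, the pair $(x,v^\ast)$ lies in $B_{C\times D}((x_0,g(x_0)),r_1)$ with no factor of $2$, and the projection inequality $d_{g^{-1}(D+y)\cap C}(x)\le d_{\tilde g^{-1}(y)\cap(C\times D)}(x,v^\ast)$ still holds. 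With that change of norm your proof is the paper's proof.
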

\begin{proof} 
Using that  at least one of the sets $C$ and $D$ is convex, for all $(x',y')\in C\times D$ we have
$$
  B_{K(C,x')}(0, \alpha_1)\times B_{K(D,y')}(0,\alpha_2) \subseteq B_{K(C\times D,(x',y'))}((0, 0), \alpha). 
$$
Therefore, defining $\tilde{g}: X\times Y \to Y$ as $\tilde{g}(x,z):= g(x)-z$,  condition \eqref{open1} implies that  
\begin{equation}\label{qualification_condition_augmented}
B_{Y}(0,1) \subseteq D \tilde{g}(x',y')\left[ B_{K(C\times D,(x',y'))}((0, 0), \alpha)  \right] \; \; \forall \; (x',y') \in B_{C \times D}( (x_0,g(x_0)), r).
\end{equation}

Now, let $(x,y) \in D_{r_1,r_2}$  and $\varepsilon>0$ be such that $d_{B_{D}(g(x_{0}), r_1)}(g(x)-y)+\varepsilon <{{r_2}\over\alpha} $.  Then, there exists $z_{\varepsilon}\in B_{D}(g(x_{0}), r_1)$ such that 
\begin{equation}
\|g(x)-y-z_{\varepsilon}\|_{Y}\leq d_{B_{D}(g(x_{0}), r_1)}(g(x)-y)+\varepsilon <{{r_2}\over\alpha}.
\end{equation}
By \eqref{qualification_condition_augmented}, we can apply Theorem \ref{thm3} to $ \tilde{g}$ and  deduce that 
\begin{equation}\label{composed_inequality}
d_{\tilde{g}^{-1}(y) \cap \left(C \times D \right)}(x,z_{\varepsilon}) \leq  \alpha \|g(x)-z_{\varepsilon}-y\|_{Y}\leq \alpha d_{B_{D}(g(x_{0}), r_1)}(g(x)-y)+\alpha \varepsilon.
\end{equation}
Finally, since  $(x',z') \in \tilde{g}^{-1}(y) \cap \left(C \times D \right)$ iff $x' \in C$, $z' \in D$ and $g(x')-y=z'$, we get that 
\begin{equation}\label{composed_inequality1}
d_{g^{-1}(D+y)\cap C}(x)\leq  d_{\tilde{g}^{-1}(y) \cap \left(C \times D \right)}(x,z_{\varepsilon}) .
\end{equation}
Since $\varepsilon$ is arbitrary, the result follows from \eqref{composed_inequality}-\eqref{composed_inequality1}
\end{proof}

\vskip 0.5cm

We can ask if we can replace the assumption ${ \bf(H_{cq}')}$ by the following one 

\begin{itemize}
\item[${ \bf(H_{cq}'')}$] there exist $\alpha_1, \alpha_2> 0$ and $r>0$ such that $g$ is continuous and G\^ateaux differentiable on $B_{C}(x_0,r)$ and  
\begin{equation}\label{open1_restricted}
\begin{array}{l}
B_Y(0, 1) \subset Dg(x)\big(B_{K(C,x)}(0, \alpha_1) \big)-B_{K(D,g(x))}(0, \alpha_2) \hspace{0.4cm}\forall \; x \in B_{g^{-1}(D)\cap C}( x_0, r).
\end{array}
\end{equation}
\end{itemize}  
As the following example shows, the answer is negative.
\begin{example}\label{contrexample} Let $C$ and $D$ be closed sets in $\mathbb{R}^2$ defined by 
$$C = \{ (x, y) \in \mathbb{R}^2 : \,  x\geq 0, \, x^2+(y+1)^2 = 1 \},$$
and 
$$ D= \{ (x, y) \in \mathbb{R}^2 : \, [  y = x] \hbox{  {\rm or}  } [ x\geq 0, \, x^2+(y+2)^2 = 4] \},$$ 
{\rm(}see {\rm Figure \ref{first_picture}}{\rm)} and take $g$ be the identity function in  $\mathbb{R}^2$. Then $C \cap D = \{0\}$,  $g^{-1}(C \cap D) = \{0\}$, $K(C, (0,0)) = \mathbb{R}_+ \times \{0\} $ and $K(D, (0, 0)) = \{(x, x) : \, x\in \mathbb{R}\}\cup \left(\mathbb{R}_+ \times \{0\}\right).$ Thus, 
$$B_{\mathbb{R}^2}(0, 1) \subset B_{K(C, (0,0))}(0, 2) - B_{K(D, (0,0))}(0, 2).$$
 Similarly, we have that \eqref{open1_restricted} holds true and it is easy to check that \eqref{open1} does not hold. We will show that  there is no $a > 0$ such that 
$$d_{g^{-1}(C \cap D)}(u) \leq a d(g(u), D) \quad \hbox{ for $u \in C$ near $0$}.$$
 Indeed,  for $x>0$ and $x^2+(y+1)^2 = 1$, with $(x, y)$ near $(0, 0)$, we have 
$$d_{g^{-1}(C \cap D)}(x, y) = \sqrt{x^2+y^2} \hbox{  and  } d(g(x, y), D) \leq 2 - \sqrt{ 4 -(x^2+y^2)}$$
and the inequality 
$$\sqrt{x^2+y^2} \leq a (2 - \sqrt{2-(x^2+y^2)}) \approx  a{{x^2+y^2}\over  4 }$$
is never satisfied when $(x, y)$ is sufficiently near to $(0, 0)$. 
\end{example}

\begin{figure}[htp]
\begin{center}
  \includegraphics[width=3in]{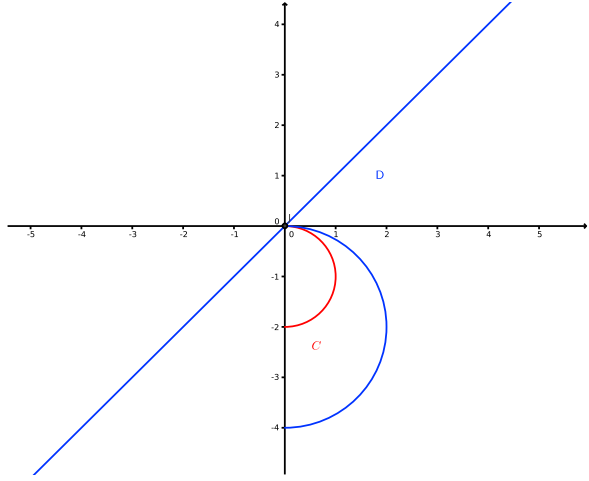}
\caption{Sets $C$ and $D$ in Example \ref{contrexample}.}
\label{first_picture}
\end{center}
\end{figure}

 \section{Application to stochastic optimal control in continuous time}\label{section_stochastic_control_continuous_time}

Let  $T>0$ and consider a filtered probability space  $(\Omega, \F, \mathbb{F}, \mathbb{P})$, on which a $d$-dimensional ($d \in \mathbb{N}^{*}$)     Brownian motion $W(\cdot)$  is defined. We suppose that $\mathbb{F}=\left\{\F_{t}\right\}_{0\leq t \leq T}$  is the   natural filtration, augmented by all $\mathbb{P}$-null sets in $\F$, associated to $W(\cdot)$. The filtration $\mathbb{F}$ is right-continuous, i.e. $\F_{t}= \cap_{t<u\leq T} \F_{u}$ (see \cite[Chapter I, Theorem 31]{Protter-book}).  

Recall that a stochastic process $v:\Omega  \times [0,T]  \to \R^n$ is {\it progressively measurable }w.r.t. $\FF$ if for all $t \in [0,T]$ the application $ \Omega \times [0,t]   \ni (s,\omega) \mapsto v(\omega,s) \in \R^n$ is $  \F_{t}\times \mathcal{B}([0,t])$ measurable (here $\mathcal{B}([0,t])$ denotes the set of Borel sets in $[0,T]$). Let us define the space 
$$\begin{array}{ll} (L^{2,2}_{\FF})^{n}:=& \left\{ v \in L^{2}\left(\Omega ; L^{2}\left([0,T]; \R^{n}\right)\right) ; \ (\omega,t) \mapsto v(\omega,t):=v(\omega)(t) \  \right.\\[6pt]
\; &  \left. \; \; \hspace{4.4cm} \mbox{is progressively measurable}\right\}.
\end{array}$$ 
When $n=1$ we will simply denote $L^{2,2}_{\FF}:=(L^{2,2}_{\FF})^{1}$.  It is easy to see that $(L^{2,2}_{\FF})^{n}$, endowed with the scalar product
$$ \langle v_1, v_2 \rangle_{L^{2,2}} := \EE\left(\int_{0}^{T} v_{1}(t) \cdot v_{2}(t) \dd t \right), $$
is a Hilbert space. We denote by $\|\cdot\|_{2,2}:=\langle \cdot, \cdot \rangle_{L^{2,2}}^{\frac{1}{2}}$ the associated Hilbersian norm.  

In this section we consider the stochastic optimal control problem
$$\left.\begin{array}{l} \inf_{x,u} \; \EE\left( \int_0^T \ell(\omega,t,x(t),u(t)) \dd t + \Phi(\omega, x(T)) \right)\\[8pt]
\mbox{s.t. } \hspace{0.2cm}\dd x(t) = b(\omega,t,x(t),u(t)) \dd t + \sigma(\omega,t,x(t),u(t)) \dd W(t) \hspace{0.3cm} t\in (0,T), \\[6pt]
\hspace{1cm} x(0)=\hat{x}_0,\\[6pt]
\hspace{1cm} u \in \mathcal{\U},
\end{array}\right\} \eqno(SP)$$
where $\mathcal{\U}$ is a non-empty, closed subset of $(L^{2,2}_{\FF})^{m}$ and  $b: \Omega \times [0,T] \times \R^{n}\times \R^{m} \to \R^{n}$,  $ \sigma: \Omega \times [0,T] \times \R^{n}\times \R^{m} \to \R^{n\times d}$, $\ell: \Omega \times [0,T] \times \R^{n}\times \R^{m} \to \R$, $\Phi: \Omega \times \R^n \to \R$, and $\hat{x}_0 \in \R^n$ are given.  In what follows we use the notation $ b=(b^{i})_{1\leq i \leq n}$ and  $\sigma=(\sigma^{ij})_{1\leq i \leq n, \; 1\leq j \leq d }$, where each $b^{i}$ and $\sigma^{ij}$ is real valued. The columns of $\sigma$ are written $\sigma^{j}$ for $j=1,\hdots, d$. For $\psi=\ell$, $\Phi$, $b^{j}$, $\sigma^{ij}$ we will denote by $\nabla_{x}\psi$ the gradient of $\psi$ w.r.t. to $x$. We will also use the notation $b_x$ and $\sigma^j_x$ to denote, respectively, the Jacobians of $b$ and $\sigma^j$ w.r.t. $x$. Similar notations will be using when differentiating w.r.t. $u$.

In order to make problem $(SP)$ meaningful, we need to impose some assumptions on the data. Concerning the terms defining the dynamics $b$ and $\sigma$ we will assume
 \medskip\\
\textbf{(A1)}  For $\psi=b^{j}$, $\sigma^{ij}$ we have: \smallskip
\\
{\rm(i) } $\psi$ is $\F_{T} \otimes \mathcal{B}([0,T]\times \R^{n}\times \R^{m})$-measurable. \smallskip
\\
{\rm(ii) } For almost all (a.a.) $(\omega, t) \in \Omega \times  [0,T]$ the mapping $(x,u)\to \psi(\omega, t,x,u)$ belongs to   $C^{1}(\R^{n}\times \R^{m})$, the application  $(\omega, t) \in \Omega \times [0,T] \to \psi(\omega,t, \cdot, \cdot) \in C^{1}(\R^{n}\times \R^{m})$ is progressively measurable and there exists $c_1 >0$ and $\rho_1 \in L^{2,2}_{\FF}$ such that  almost surely (a.s.) in   $(\omega, t)$
\begin{equation}\label{acotamientoderivadas}\left\{\begin{array}{c} |\psi(\omega,t,x,u)|\leq  c_{1} \left(  \rho_{1}(\omega,t)+ |x|+|u|\right), \\[4pt]
																			|\nabla_x \psi (\omega, t,x,u)|+ |\nabla_u \psi(\omega,t,x,u)| \leq c_{1}. \\[4pt] 	
\end{array}\right.\end{equation}

Concerning the terms defining the cost functions $\ell$ and $\Phi$ we will assume \medskip\\
\textbf{(A2)} The functions  $\ell$ and  $\Phi$  are respectively  $\F_{T} \otimes  \mathcal{B}([0,T]\times \R^{n}\times \R^{m})$  and $\F_{T} \otimes \mathcal{B}(\R^{n})$ measurable. Moreover,  for a.a. $(\omega,t)$ the maps $(x,u)\to \ell(\omega, t,x,u)$ and     $x\to \Phi(\omega, x)$ are $C^1$. The application  $(\omega, t) \in \Omega \times [0,T] \to \ell(\omega,t, \cdot, \cdot) \in C^{1}(\R^{n}\times \R^{m})$ is progressively measurable. In addition, there exists $c_{2}>0$,  $\rho_2 \in L^{2,2}_{\FF}$ and $\rho_3 \in L^{2}(\Omega,\F_T)$ such that  almost surely in   $(\omega, t)$ we have 
\begin{equation}\label{crecimientoderivadascosto}\left\{\begin{array}{c}|\ell (\omega , t,x,u)|\leq  c_{2} \left(  \rho_2(\omega,t)+ |x|^2+|u|^2\right), \\[4pt]
 |\nabla_x\ell(\omega,t,x,u)|+ |\nabla_u \ell(\omega, t,x,u)|\leq  c_{2} \left(  \rho_2(\omega,t)+ |x|+|u|\right), \\[4pt]
 \ |\Phi (\omega, x)|\leq  c_{2} \left( \rho_3(\omega)+ |x|^{2}\right), \; |\nabla_x \Phi(\omega,x)|\leq  c_{2} \left( \rho_3(\omega)+ |x|\right). 
 \end{array}\right.\end{equation}

The previous assumptions are rather general and cover the case of linear quadratic problems {\rm(}see e.g. {\rm \cite[Chapter 3 and Chapter 6]{YongZhou})}. 

Our aim now is to provide a functional framework for problem $(SP)$ that will allow us to apply the abstract results in the previous sections to derive a first order optimality condition  at a local solution.  We proceed as in \cite{backhoffsilva14} and we focus first in writing the SDE constraint in the form of an equality constraint in a suitable function space. 
 
Let us consider the mapping $I: \R^{n} \times (L^{2,2}_{\FF})^{n} \times (L^{2,2}_{\FF})^{n\times d} \to (L^{2,2}_{\FF})^{n}$
\begin{equation}\label{amdmmrnnrnssasa}I(x_0,x_1,x_2)(\cdot):= x_0 + \int_{0}^{(\cdot)} x_{1}(s) \dd s + \sum_{j=1}^d \int_{0}^{(\cdot)} x_{2}^j(s) \dd W^j(s).\end{equation}
Standard results in It\^o's stochastic calculus theory  imply that $I$ is well defined.  Consider the {\it It\^o space} $\I^{n}:=I(  \R^{n} \times (L^{2,2}_{\FF})^{n} \times (L^{2,2}_{\FF})^{n\times d})$. Endowed with the scalar product
\begin{equation}\langle x,y \rangle_{\I^{n}}:=  x_{0} \cdot y_{0} + \EE\left(\int_{0}^{T}  x_{1}(t) \cdot y_{1}(t) \dd t \right) +  \sum_{j=1}^d \EE\left(\int_{0}^{T} x_2^{j}(t) \cdot y_2^j(t)\dd t \right),
\end{equation}
we have that $\I^{n}$ is a Hilbert space, which, since $I$ is injective (see \cite[Lemma 2.1]{backhoffsilva14}), can be identified with  $\R^{n} \times (L^{2,2}_{\FF})^{n} \times (L^{2,2}_{\FF})^{n\times d}$. Let us  denote by $\| \cdot \|_{\I^{n}}:= \langle \cdot, \cdot\rangle_{\I^{n}}^{\frac{1}{2}}$  the associated Hilbersian-norm.

%
Recall that by definition $x \in \I^{n}$ solves  the controlled SDE in $(SP)$ iff
\begin{equation}\label{rigorousdefsde}
x(t)= x_0 + \int_{0}^{t}b(s,x(s),u(s)) \dd s+ \int_{0}^{t} \sigma(s,x(s),u(s)) \dd W(s) \hspace{0.4cm} \forall \; t \in [0,T].
\end{equation}
It is well known that under  {\bf(A1)} equation \eqref{rigorousdefsde} admits a unique solution $x \in \I^{n}$ (see e.g. \cite[Chapter 5]{KaraShreve91}). It is also known that  $\EE\left( \sup_{t\in [0,T]} |x(t)|^{2}\right)$ is finite (see e.g. \cite[Lemma 2.2]{backhoffsilva14}).  A more precise information is given by the following lemma whose proof is by now standard. We provide here the details of the proof  since we need to obtain explicit expressions for the  involved constants.
\begin{lemma}\label{lemmacota1} For all $t\in [0,T]$ and $u\in (L^{2,2}_{\FF})^{m}$, the solution $x\in \I^{n}$ satisfies
\begin{equation}\label{cotal2} \EE\left( \sup_{s \in [0,t]} |x(s)|^{2} \right)= c \left[|x_0|^2+\EE\left(\int_{0}^{t}|b(s,0,u)|^{2} \dd s \right) + \EE\left(\int_{0}^{t}|\sigma(s,0,u)|^{2} \dd s \right)   \right],\end{equation}
where $c=\max\{24,6T\}e^{6Tc_1^2\max\{T, 4d\}}$.
\end{lemma}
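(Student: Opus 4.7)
The plan is to start from the integral form \eqref{rigorousdefsde} and derive a Gronwall-type estimate for $M(t):=\EE[\sup_{s\in[0,t]}|x(s)|^{2}]$. First I would apply the elementary inequality $|a+b+c|^{2}\leq 3(|a|^{2}+|b|^{2}+|c|^{2})$ to obtain
$$|x(s)|^{2}\leq 3|x_{0}|^{2}+3\left|\int_{0}^{s}b(\tau,x(\tau),u(\tau))\,\dd\tau\right|^{2}+3\left|\int_{0}^{s}\sigma(\tau,x(\tau),u(\tau))\,\dd W(\tau)\right|^{2},$$
after which I would take $\sup_{s\in[0,t]}$ and expectation on both sides.

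For the drift integral, Cauchy--Schwarz produces $\sup_{s\in[0,t]}|\int_{0}^{s}b\,\dd\tau|^{2}\leq T\int_{0}^{t}|b(\tau,x(\tau),u(\tau))|^{2}\,\dd\tau$. For the stochastic integral, I would first bound $|\sum_{j=1}^{d}\int_{0}^{s}\sigma^{j}\,\dd W^{j}|^{2}\leq d\sum_{j=1}^{d}|\int_{0}^{s}\sigma^{j}\,\dd W^{j}|^{2}$ by Cauchy--Schwarz in the index $j$, and then combine the vector-valued Doob maximal inequality with It\^o's isometry in each summand to obtain
$$\EE\left[\sup_{s\in[0,t]}\left|\int_{0}^{s}\sigma(\tau,x(\tau),u(\tau))\,\dd W(\tau)\right|^{2}\right]\leq 4d\,\EE\left[\int_{0}^{t}|\sigma(\tau,x(\tau),u(\tau))|^{2}\,\dd\tau\right].$$
Using the linear-growth estimate $|\psi(\tau,x,u)|^{2}\leq 2|\psi(\tau,0,u)|^{2}+2c_{1}^{2}|x|^{2}$ for $\psi\in\{b,\sigma\}$, which follows from the uniform gradient bound in \textbf{(A1)}(ii), one can split the two integrands into a $|\psi(\tau,0,u(\tau))|^{2}$ part and a $|x(\tau)|^{2}$ part, the latter of which is exchanged with the expectation via Fubini--Tonelli and bounded above by $\int_{0}^{t}M(\tau)\,\dd\tau$.

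Collecting constants then leads to an inequality of the form
$$M(t)\leq A(t)+B\int_{0}^{t}M(\tau)\,\dd\tau,$$
with $A(t)\leq 3|x_{0}|^{2}+\max\{6T,24d\}\,\EE[\int_{0}^{t}(|b(\tau,0,u(\tau))|^{2}+|\sigma(\tau,0,u(\tau))|^{2})\,\dd\tau]$ and $B=6c_{1}^{2}\max\{T,4d\}$. A direct application of Gronwall's lemma yields $M(t)\leq A(t)\,e^{Bt}\leq A(T)\,e^{6Tc_{1}^{2}\max\{T,4d\}}$, which produces the announced constant $c=\max\{24,6T\}e^{6Tc_{1}^{2}\max\{T,4d\}}$ (the equality sign in the statement should be read as an inequality, since this is what the argument delivers).

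The main technical point is the careful bookkeeping of the Brownian dimension $d$ through the Cauchy--Schwarz and Doob steps, since this is what fixes the explicit constants appearing in $c$; once the stochastic-integral estimate is in hand, the remainder is a routine combination of linear growth, Fubini--Tonelli, and Gronwall's lemma. A subtle preliminary issue is to justify that $M(t)<\infty$ so that Gronwall's inequality may be applied; this is guaranteed by the general a priori bound $\EE[\sup_{s\in[0,T]}|x(s)|^{2}]<\infty$ mentioned in the paragraph preceding the statement.
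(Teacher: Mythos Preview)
Your proposal follows essentially the same route as the paper: the integral representation \eqref{rigorousdefsde}, the inequality $|a+b+c|^{2}\leq 3(|a|^{2}+|b|^{2}+|c|^{2})$, Jensen/Cauchy--Schwarz for the drift integral, Doob's maximal inequality for the stochastic integral, the Lipschitz split $|\psi(\tau,x,u)|^{2}\leq 2|\psi(\tau,0,u)|^{2}+2c_{1}^{2}|x|^{2}$, and Gronwall's lemma. The paper's argument is virtually identical.

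The only noticeable difference is in how the Brownian dimension $d$ enters. You first apply Cauchy--Schwarz in the index $j$ and then Doob componentwise, producing the factor $4d$ in front of $\EE\int_{0}^{t}|\sigma|^{2}$. The paper instead applies Doob's $L^{2}$ inequality directly to the $\R^{n}$-valued martingale $s\mapsto\int_{0}^{s}\sigma\,\dd W$, obtaining only the factor $4$; the $d$ appears later in the Lipschitz step, since $|\sigma(s,x,u)-\sigma(s,0,u)|^{2}\leq d\,c_{1}^{2}|x|^{2}$ (there are $d$ columns). Both routes give the same coefficient $24dc_{1}^{2}$ in front of $\int_{0}^{t}M(\tau)\,\dd\tau$, but the paper's direct use of Doob yields the constant $24$ (rather than your $24d$) in front of $\EE\int_{0}^{t}|\sigma(\tau,0,u)|^{2}\,\dd\tau$, which is what gives exactly the announced prefactor $\max\{24,6T\}$. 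Your Cauchy--Schwarz-in-$j$ step is thus correct but slightly wasteful; dropping it and applying Doob to the full stochastic integral recovers the sharper constant in the statement.
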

\begin{proof} Using the inequality $(a_1+a_2+a_3)^{2} \leq 3(a_1^2+a_2^2+a_3^2)$ for all $a_1$, $a_2$ and $a_3$ in $\R$ and Jensen's inequality,   for all $0\leq s \leq t \leq T$ expression \eqref{rigorousdefsde} yields
$$
|x(s)|^{2} \leq 3\left( |x_0|^2 + s\int_{0}^{s}|b(s',x(s'), u(s'))|^{2} \dd s' + \left|\int_{0}^{s} \sigma(s',x(s'),u(s')) \dd W(s')\right|^{2}\right).
$$
By the linear growth  condition in \eqref{acotamientoderivadas} and the fact that $x\in \I^{n}$ and $u\in (L^{2,2}_{\FF})^m$, we have that $\sigma(\cdot,x(\cdot),u(\cdot)) \in (L^{2,2}_{\FF})^{n\times d}$ and so, for each $j=1,\hdots,d$, the $\R^n$-valued process $ s \in [0,T] \mapsto  \int_{0}^{s} \sigma^{j}(s',x(s'),u(s')) \dd W^{j}(s')$ is a martingale. Thus, defining $g(t):= \EE(\sup_{s\in [0,t]}|x(s)|^{2})$, Doob's inequality and the  Lipschitz  property of $b$ and $\sigma$ with respect to $x$ in \eqref{acotamientoderivadas} imply that
$$\begin{array}{rcl}
g(t)&\leq& 3\left[ |x_0|^2 + T\EE\left(\int_{0}^{t}|b(s,x(s), u(s))|^{2} \dd s\right) +4\EE\left(\int_{0}^{t}|\sigma(s,x(s), u(s))|^{2} \dd s\right)\right]\\[6pt]
\; & \leq & 3\left[ |x_0|^2 + 2T\EE\left(\int_{0}^{t}\left[|b(s,0, u(s))|^{2}+ c_{1}^{2} |x(s)|^{2}\right]  \right)\dd s\right. \\[6pt]
\; & \; & \hspace{0.5cm}\left.+ 8\EE\left(\int_{0}^{t}\left[|\sigma(s,0, u(s))|^{2}+ dc_{1}^{2} |x(s)|^{2}\right] \dd s\right)\right]\\[6pt]
\; & \leq & a+ b\int_{0}^{t} g(s) \dd s,
\end{array}
$$
where  
$$ a=\max\{24,6T\} \left[|x_0|^2+ \EE\left(\int_{0}^{t}|b(s,0, u(s))|^{2} \dd s\right) +\EE\left(\int_{0}^{t} |\sigma(s,0, u(s))|^{2} \dd s\right)\right], $$  
and $b= 6c_1^2\max\{T, 4d\}$. The result then follows from Gronwall's Lemma.
\end{proof}
\begin{remark} Estimates of the form \eqref{cotal2}  can be easily extended to any power $p>1$ by using in the previous proof the Burkholder-Davis-Gundy inequality {\rm(}see e.g. {\rm \cite{MR2340054})} instead of Doob's inequality.
\end{remark}

Now, let us consider the application $g: \I^{n} \times (L^{2,2}_{\FF})^{m} \to \I^{n}$ defined by
\begin{equation}\label{meqmemnnrnnrnr}g(x,u)(\cdot) :=  \hat{x}_0+  \int_{0}^{(\cdot)} b(s,x(s),u(s)) \dd s +  \int_{0}^{(\cdot)} \sigma(s,x(s), u(s)) \dd W(s)-x(\cdot),\end{equation}
which defines the SDE constraint in $(SP)$ by  imposing $g(x,u)=0$. Consider also the application $f: \I^{n} \times (L^{2,2}_{\FF})^{m} \to \R$ defined by
 $$f(x,u):=  \EE \left( \int_{0}^{T}\ell(t,x(t),u(t))\dd t + \Phi(x(T)) \right),$$
 which describes the cost functional in $(SP)$. Assumption \textbf{(A2)} implies that $f$ is well-defined. Problem $(SP)$ can thus be rewritten in the following abstract form 
$$ \inf \; \; f(x,u) \; \;  \mbox{subject to } \;  g(x,u)=0, \; \; \;  u\in \U. \eqno(SP)$$
We proceed now to verify that $f$ and $g$ satisfy  the assumptions considered in Section \ref{optimality_conditions_abstract_form}, when the underlying space  given by $X:= \I^n \times (L^{2,2}_{\FF})^{m}$.

We begin by studying some properties of  $g$. The following result is proved in the appendix in \cite{backhoffsilva14}. For the sake of completeness we provide here a short proof.
 \begin{lemma}\label{wrnwnrnwnrnqbqbqbq}  Under {\rm \textbf{(A1)}} the mapping $g$ is Lipschitz continuous and  G\^ateaux  differentiable.  Its  G\^ateaux derivative $Dg(x,u): \I^{n} \times (L^{2,2}_{\FF})^{m} \mapsto   \I^{n} $ is given by 
\begin{equation}\label{mmsnbabbasvvwwwaaass}\begin{array}{rcl} Dg(x,u)(z,v)(\cdot) &=&  \int_{0}^{(\cdot)}\left[   b_{x}(t,x(t),u(t)) z(t) +   b_{u}(t,x(t),u(t)) v(t)\right] \dd t\\[4pt]
 			\;     & \; & +\sum_{j=1}^d \int_{0}^{(\cdot)}\left[   \sigma_{x}^{j}(t,x(t),u(t)) z(t) +   \sigma_{u}^{j}(t,x(t),u(t)) v(t)\right] \dd W^j(t) \\[4pt]
			\; & \; & -z(\cdot),	\end{array}\end{equation} 
for all $(z,v) \in \I^{n} \times  (L^{2,2}_{\FF})^{m}$.
 \end{lemma}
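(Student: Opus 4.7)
The plan is to exploit the Hilbert space identification $\I^{n} \cong \R^{n} \times (L^{2,2}_{\FF})^{n} \times (L^{2,2}_{\FF})^{n\times d}$ provided by the injective map $I$ in \eqref{amdmmrnnrnssasa}, so that the norm $\|\cdot\|_{\I^{n}}$ decomposes as the sum of the norms of the three It\^o components. For $x\in \I^{n}$ with decomposition $(x(0),x_{1},x_{2})$, a direct inspection of \eqref{meqmemnnrnnrnr} shows that the It\^o components of $g(x,u)\in \I^{n}$ are
\[
\bigl(\hat{x}_{0}-x(0),\ b(\cdot,x,u)-x_{1},\ \sigma(\cdot,x,u)-x_{2}\bigr).
\]
Both $\|g(x,u)-g(x',u')\|_{\I^{n}}^{2}$ and the difference quotient defining the derivative thus reduce to computations carried out component by component.

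For Lipschitz continuity, each component of $g(x,u)-g(x',u')$ is controlled via the uniform bound $|\nabla_{x}\psi|+|\nabla_{u}\psi|\leq c_{1}$ from \textbf{(A1)}, which yields
\[
|b(s,x(s),u(s))-b(s,x'(s),u'(s))|^{2}+\sum_{j=1}^{d}|\sigma^{j}(s,x(s),u(s))-\sigma^{j}(s,x'(s),u'(s))|^{2} \leq K\bigl(|x(s)-x'(s)|^{2}+|u(s)-u'(s)|^{2}\bigr)
\]
for some constant $K$ depending only on $c_{1}$, $n$, $d$. The only non-routine estimate is the control $\EE\int_{0}^{T}|x(s)-x'(s)|^{2}\dd s \leq C\|x-x'\|_{\I^{n}}^{2}$, which I would obtain by writing $x-x'$ in terms of its It\^o decomposition and combining Jensen's inequality on the drift term with the It\^o isometry on each diffusion term; this gives $\EE|x(s)-x'(s)|^{2} \leq 3(1+T)\|x-x'\|_{\I^{n}}^{2}$ uniformly in $s\in[0,T]$. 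Summing the resulting bounds over the three components produces the Lipschitz constant of $g$.

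For G\^ateaux differentiability at $(x,u)$ in the direction $(z,v)$, I would analyze, as $\tau\downarrow 0^{+}$, the three It\^o components of $\tau^{-1}[g(x+\tau z,u+\tau v)-g(x,u)]$. The initial-value component equals $-z(0)$ for every $\tau>0$, so no limit is needed there. For the drift component, the fundamental theorem of calculus gives
\[
\frac{b(s,x+\tau z,u+\tau v)-b(s,x,u)}{\tau}-b_{x}(s,x,u)z(s)-b_{u}(s,x,u)v(s) = \int_{0}^{1}\bigl\{[b_{x}(s,x+\theta\tau z,u+\theta\tau v)-b_{x}(s,x,u)]z(s)+[b_{u}(s,x+\theta\tau z,u+\theta\tau v)-b_{u}(s,x,u)]v(s)\bigr\}\dd\theta,
\]
whose integrand tends to $0$ almost surely thanks to the $C^{1}$ assumption in \textbf{(A1)} and is dominated by $2c_{1}(|z(s)|+|v(s)|)\in L^{2,2}_{\FF}$ by virtue of \eqref{acotamientoderivadas}. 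Dominated convergence in $L^{2,2}_{\FF}$ then delivers the limit of the drift component; the identical argument applied to each column $\sigma^{j}$ yields the diffusion components. Reassembling the three limiting components, and accounting for the $-z(\cdot)$ term coming from the initial and drift parts combined with $z_{1}$, $z_{2}^{j}$, recovers exactly formula \eqref{mmsnbabbasvvwwwaaass}. The one delicate point is the justification of dominated convergence in $L^{2,2}_{\FF}$, which is painless here because the uniform derivative bound in \textbf{(A1)} provides a dominant that depends linearly on $|z|+|v|$ and is therefore in $L^{2,2}_{\FF}$.
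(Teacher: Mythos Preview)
Your proposal is correct and follows essentially the same route as the paper's proof: decompose $g$ into its three It\^o components, use the uniform derivative bound in \textbf{(A1)} for Lipschitz continuity (together with the estimate $\EE\int_{0}^{T}|x(s)-x'(s)|^{2}\dd s \leq C\|x-x'\|_{\I^{n}}^{2}$, which the paper obtains via Jensen's and Doob's inequalities while you use Jensen plus the It\^o isometry---both work), and then apply dominated convergence in $(L^{2,2}_{\FF})^{n}$ to the componentwise difference quotients for G\^ateaux differentiability. The only point you leave implicit that the paper states explicitly is the boundedness of the linear map $(z,v)\mapsto Dg(x,u)(z,v)$, but this is immediate from the same derivative bounds in \eqref{acotamientoderivadas}.
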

 \begin{proof} Note that for any $(x,u_1)$, $(y,u_2) \in \I^{n}\times (L^{2,2}_{\FF})^{m}$ we have   
 $$\begin{array}{c}\|g(x,u_1)(\cdot)- g(y,u_2)(\cdot)\|_{\I^{n}}^{2}\\[6pt]
 = |x_{0}- y_{0}|^2+\EE\left(\int_{0}^{T}\left|b(t,x(t), u^1(t))-b(t,y(t), u^2(t)) + y_{1}(t)- x_{1}(t)\right|^{2} \dd t\right) \\[6pt]
+\sum_{j=1}^{d}\EE\left(\int_{0}^{T}\left|\sigma^j(t,x(t), u^1(t))-\sigma^j(t,y(t), u^2(t)) + y_2^{j}(t)- x_2^{j}(t)\right|^{2} \dd t\right),
\end{array}$$\normalsize
 which, by the Lipschitz assumption in \eqref{acotamientoderivadas}, is bounded by
 $$ c\left[ \|x- y\|_{\I^{n}}^{2}+ \EE\left( \int_{0}^{T}|x(t)-y(t)|^{2}\dd t\right) + \EE\left( \int_{0}^{T}|u^{1}(t)-u^{2}(t)|^{2}\dd t\right) \right], $$
 for some constant $c>0$. Now,  as in the proof of Lemma \ref{lemmacota1},  by Jensen's and Doob's inequalities we easily get the existence of a constant $c'>0$ such that $$ \EE\left( \int_{0}^{T}|x(t)-y(t)|^{2}\dd t\right) \leq c'  \|x- y\|_{\I^{n}}^{2},$$ from which the Lipschitz property of $g$ easily follows. Now, for $j=1,\hdots,d$  let us set  
$$Db(t,x,u)(z,v)= b_{x}(t,x,u) z +   b_{u}(t,x,u) v, \; \;  D\sigma^j(t,x,u)(z,v)= \sigma_{x}^j(t,x,u) z +   \sigma_{u}^j(t,x,u) v$$ 
 and define 
 $$\begin{array}{l} I_1:= \EE\left(\int_{0}^{T} \left[\frac{b(t, x(t)+ \tau z(t), u(t)+ \tau v(t))- b(t,x(t),u(t))}{\tau}- Db(t,x(t),u(t))(z(t),v(t))\right]^{2}\dd t\right), \\[6pt]
 I_2^j:= \EE\left(\int_{0}^{T} \left[\frac{\sigma^j(t, x(t)+ \tau z(t), u(t)+ \tau v(t))- \sigma^j(t,x(t),u(t))}{\tau}- D\sigma^j(t,x(t),u(t))(z(t),v(t))\right]^{2}\dd t \right). \end{array}$$
By the Lipschitz property of $b$ and $\sigma$ in \eqref{acotamientoderivadas} and the dominated convergence theorem, we get that $I_1$ and $I_2^j$ tend to $0$ as $\tau \downarrow 0$. This implies that
$$(x,u)\in \I^{n}\times (L^{2,2}_{\FF})^{m} \mapsto  \int_{0}^{(\cdot)} b(s,x(s),u(s)) \dd s +  \int_{0}^{(\cdot)} \sigma(s,x(s), u(s)) \dd W(s) \in \I^{n}$$
 is directionally  differentiable with directional derivative
 $$\begin{array}{ll}(z,v)\in \I^{n}\times (L^{2,2}_{\FF})^{m} \mapsto & \int_{0}^{(\cdot)}  Db(t,x(t),u(t))(z(t),v(t))\dd t \\[6pt]
 \; & + \sum_{j=1}^d \int_{0}^{(\cdot)} D\sigma^j(t,x(t),u(t)) (z(t),v(t)) \dd t.
 \end{array}$$
The continuity of the linear application above follows easily from the bounds in the second relation in \eqref{acotamientoderivadas}. Finally, since $(x,u)\in  \I^{n}\times (L^{2,2}_{\FF})^{m} \mapsto   x\in \I^{n}$ is $C^{\infty}$ with derivative  $(z,v)\in  \I^{n}\times (L^{2,2}_{\FF})^{m} \mapsto   z\in \I^{n}$, we obtain \eqref{mmsnbabbasvvwwwaaass}.
 \end{proof} \medskip
 
The previous lemma yields the following result
 \begin{lemma}\label{uniformsurjectivity}  For every  $(x,u)\in \I^{n} \times (L^{2,2}_{\FF})^{m}$ and $\delta \in \I^{n}$, there exists a unique $z\in \I^{n}$ such that $Dg(x,u)(z,0)= \delta$. Moreover, there exists a constant $c>0$, independent of $(x,u,z,\delta)$,   such that $\|z\|_{\I^{n}} \leq  c \|\delta\|_{\I^{n}}$
 \end{lemma}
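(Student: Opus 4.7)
The plan is to reduce the equation $Dg(x,u)(z,0)=\delta$ to a linear stochastic differential equation for $z$ whose coefficients are uniformly bounded, and then to mimic the Gronwall argument of Lemma \ref{lemmacota1} to obtain both the existence/uniqueness and the norm estimate.

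First, I would unpack the equation. By definition of the It\^o space, $\delta \in \I^{n}$ can be written uniquely as $\delta = I(\delta_{0},\delta_{1},\delta_{2})$ for some $(\delta_{0},\delta_{1},\delta_{2}) \in \R^{n} \times (L^{2,2}_{\FF})^{n} \times (L^{2,2}_{\FF})^{n \times d}$, and $\|\delta\|_{\I^{n}}^{2} = |\delta_{0}|^{2} + \EE \int_{0}^{T}|\delta_{1}(t)|^{2}\dd t + \sum_{j=1}^{d}\EE \int_{0}^{T}|\delta_{2}^{j}(t)|^{2}\dd t$. Plugging \eqref{mmsnbabbasvvwwwaaass} with $v=0$ into $Dg(x,u)(z,0)=\delta$, identifying the $\I^{n}$-components, and using the uniqueness of the It\^o decomposition, I would obtain that the identity is equivalent to $z(0) = -\delta_{0}$, together with
$$z(\cdot) = -\delta_{0} + \int_{0}^{(\cdot)} \left[ b_{x}(t,x(t),u(t))z(t) - \delta_{1}(t)\right]\dd t + \sum_{j=1}^{d} \int_{0}^{(\cdot)} \left[ \sigma_{x}^{j}(t,x(t),u(t))z(t) - \delta_{2}^{j}(t)\right]\dd W^{j}(t).$$
This is a linear SDE in $z$ whose drift and diffusion coefficients are progressively measurable and globally Lipschitz in $z$ with Lipschitz constant at most $c_{1}$ by the bound on $\nabla_{x}b^{j}$ and $\nabla_{x}\sigma^{ij}$ in \textbf{(A1)}. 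Standard theory then yields a unique strong solution $z \in \I^{n}$, proving existence and uniqueness.

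Next, I would derive the quantitative estimate. Following exactly the steps of Lemma \ref{lemmacota1}, I would apply Jensen's inequality on the drift term, Doob's maximal inequality on the martingale terms, and the uniform bounds $|b_{x}|, |\sigma_{x}^{j}| \leq c_{1}$ to obtain, with $h(t):= \EE(\sup_{s\in [0,t]}|z(s)|^{2})$, an inequality of the form
$$h(t) \leq A + B\int_{0}^{t} h(s)\dd s,$$
where $B = 6c_{1}^{2}\max\{T,4d\}$ depends only on $T$, $d$ and $c_{1}$, and $A = \max\{24,6T\}\left[|\delta_{0}|^{2}+\EE \int_{0}^{T}|\delta_{1}(t)|^{2}\dd t + \sum_{j=1}^{d}\EE \int_{0}^{T}|\delta_{2}^{j}(t)|^{2}\dd t\right] \leq \max\{24,6T\}\|\delta\|_{\I^{n}}^{2}$. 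Gronwall's lemma then gives $h(T) \leq \tilde{c}\,\|\delta\|_{\I^{n}}^{2}$ with $\tilde{c}=\max\{24,6T\}e^{BT}$, a constant that depends only on $T$, $d$ and $c_{1}$, and in particular is independent of $(x,u,\delta,z)$.

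Finally, to convert this bound on $\EE(\sup_{s\in[0,T]}|z(s)|^{2})$ into the required bound on $\|z\|_{\I^{n}}$, I would use that $z = I(-\delta_{0}, b_{x}z-\delta_{1}, \sigma_{x}z-\delta_{2})$ together with the elementary inequalities $|b_{x}(t,\cdot)z(t)-\delta_{1}(t)|^{2} \leq 2c_{1}^{2}|z(t)|^{2}+2|\delta_{1}(t)|^{2}$ and analogously for each $\sigma_{x}^{j}z-\delta_{2}^{j}$. Integrating in time, taking expectations, and combining with the Gronwall bound yields $\|z\|_{\I^{n}}^{2} \leq c^{2}\|\delta\|_{\I^{n}}^{2}$ for some $c>0$ depending only on $T$, $d$ and $c_{1}$. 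The main subtlety, which I would be careful about, is ensuring that at no step is a constant allowed to depend on the specific pair $(x,u)$; this is guaranteed precisely because all the bounds on $b_{x},\sigma_{x}^{j}$ in \textbf{(A1)} are uniform in $(\omega,t,x,u)$.
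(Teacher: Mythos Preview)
Your proposal is correct and follows essentially the same route as the paper: both reduce $Dg(x,u)(z,0)=\delta$ to the linear SDE $\dd z = (b_{x}z-\delta_{1})\dd t + (\sigma_{x}z-\delta_{2})\dd W$, $z(0)=-\delta_{0}$, invoke standard existence/uniqueness, and then use the uniform bounds $|b_{x}|,|\sigma_{x}^{j}|\leq c_{1}$ together with the Gronwall estimate of Lemma~\ref{lemmacota1} to obtain a constant independent of $(x,u)$. The only difference is that the paper simply cites Lemma~\ref{lemmacota1} and stops, whereas you spell out the intermediate bound on $\EE(\sup_{s}|z(s)|^{2})$ and then explicitly convert it to a bound on $\|z\|_{\I^{n}}$ via the decomposition $z=I(-\delta_{0},b_{x}z-\delta_{1},\sigma_{x}z-\delta_{2})$; this last step is left implicit in the paper but is exactly the right way to close the argument.
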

 \begin{proof}
By Lemma \ref{wrnwnrnwnrnqbqbqbq}, we have that $Dg(x,u)(z,0)=\delta$ is equivalent to the SDE
$$
\begin{array}{rcl} \dd z &=& \left[b_{x}(t,x(t),u(t)) z(t)-\delta_1\right] \dd t + \left[ \sigma_{x}(t,x(t),u(t)) z(t)- \delta_2\right] \dd W(t), \\[4pt]
		z(0)&=& -\delta_0.
\end{array}
$$
The existence and uniquenes of a solution $z$ of this equation is well-known (see e.g. \cite[Chapter 5]{KaraShreve91}). Moreover, using that   $\| b_x\|_{\infty}\leq c_1$ and $ \|\sigma_x\|_{\infty}\leq c_1$,  Lemma \ref{lemmacota1} implies the existence of a constant  $c>0$,  independent of $(x,u,z,\delta)$, such that
$$ \| z\|_{\I^{n}} \leq   c \left[ |\delta_0|^{2}+ \EE\left( \int_{0}^{T} |\delta_1|^{2} \dd t\right)+ \EE\left( \int_{0}^{T} |\delta_2|^{2} \dd t\right)\right].$$
The result follows.
 \end{proof} \smallskip
 
As a consequence of the last two lemmas and Theorem \ref{thm3}, $g$ satisfies \eqref{open} with $C:= \I^n \times \V$  and  $\alpha=c$, where $\V$ is any closed set of $ (L^{2,2}_{\FF})^{m}$. Therefore, the following result holds true.

\begin{corollary}\label{calmness_G_stochastic_control_continuous_time}
For any closed set  $\V\subset  (L^{2,2}_{\FF})^{m}$, we have
$$d((x, u), g^{-1}(y)\cap \left(\I^n\times \V\right)) \leq c\Vert g(x, u) - y\Vert \quad \forall \; x, \; y\in \I^n \; \; \mbox{{\rm and} }  u\in \V.$$
\end{corollary}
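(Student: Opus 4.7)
The strategy is to apply Theorem \ref{thm3} to the mapping $g$ with underlying space $X := \I^n \times (L^{2,2}_{\FF})^m$ and constraint set $C := \I^n \times \V$, verifying that the hypotheses $({\bf H_g})$ and ${ \bf(H_{cq})}$ hold with a constant $\alpha$ that is independent of the base point and for arbitrarily large radius $r$.

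Assumption $({\bf H_g})$ is immediate from Lemma \ref{wrnwnrnwnrnqbqbqbq}, which provides (global) Lipschitz continuity and G\^ateaux differentiability of $g$ on all of $X$. The key step is verifying ${ \bf(H_{cq})}$ uniformly in the base point. Given $(x,u) \in C$ and any $\delta \in B_{\I^n}(0,1)$, Lemma \ref{uniformsurjectivity} supplies $z \in \I^n$ with $Dg(x,u)(z,0)=\delta$ and $\|z\|_{\I^n}\leq c\|\delta\|_{\I^n}\leq c$, where $c>0$ is the uniform constant from that lemma. It remains to check that $(z,0) \in K(C,(x,u))$: since $\I^n$ is a linear subspace we have $x+\tau z \in \I^n$ for every $\tau>0$, while $u + \tau\cdot 0 = u \in \V$ trivially, so $(x+\tau z, u) \in C$ for all $\tau>0$; taking $\tau_n \downarrow 0$ and the constant sequence $(z_n,v_n):=(z,0)$ yields $(z,0)\in K(C,(x,u))$ by the sequential characterization of the contingent cone. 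Therefore
$$B_{\I^n}(0,1) \subseteq Dg(x,u)\bigl( B_{K(C,(x,u))}(0,c) \bigr),$$
which means ${ \bf(H_{cq})}$ holds with $\alpha:=c$ for \emph{every} radius $r>0$ and uniformly over $(x,u)\in C$.

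Finally, fix $(x,u) \in C$ and $y \in \I^n$. Picking $r$ large enough that $c\|g(x,u)-y\|_{\I^n}<r$ and splitting $r = r_1 + r_2$ with $r_2 > c\|g(x,u)-y\|_{\I^n}$, the trivial inclusion $(x,u) \in B_{C}((x,u),r_1)$ together with the strict inequality $\|g(x,u)-y\|_{\I^n} < r_2/c$ shows that $((x,u),y) \in D_{r_1,r_2}$ when Theorem \ref{thm3} is applied at base point $(x,u)$. Invoking the theorem then gives the desired bound with constant $\alpha = c$, and since $c$ depends neither on $(x,u)$ nor on $y$, the inequality holds for all admissible triples.

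The only nontrivial point is the membership $(z,0) \in K(C,(x,u))$; this is easy here precisely because $\I^n$ is a \emph{full} linear space and the direction in the $\V$-component can be taken to be $0$, so no tangential regularity assumption on $\V$ is needed and the uniform surjectivity of Lemma \ref{uniformsurjectivity} transfers directly into the required uniform qualification condition.
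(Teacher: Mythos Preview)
Your proposal is correct and follows exactly the approach indicated in the paper, which simply states that the corollary is a consequence of Lemmas \ref{wrnwnrnwnrnqbqbqbq}--\ref{uniformsurjectivity} and Theorem \ref{thm3}; you have supplied the details the paper omits, in particular the verification that $(z,0)\in K(C,(x,u))$ via the trivial $0$-direction in the $\V$-component. One small technical remark: you apply Theorem \ref{thm3} at a base point $(x,u)$ that need not lie in $g^{-1}(0)$, whereas the section's standing assumption is $x_0\in g^{-1}(0)\cap C$; this causes no trouble since the proof of Theorem \ref{thm3} never uses $g(x_0)=0$, or alternatively you could fix any $(x_0^*,u_0^*)\in g^{-1}(0)\cap C$ (such points exist by well-posedness of the SDE) and take $r_1>\|(x,u)-(x_0^*,u_0^*)\|$.
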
 
 
 \smallskip
 
Now,  we consider the properties of the cost functional $f$.
 \begin{lemma}\label{F_lipchitz_gateaux} The function $f$ is locally Lipschitz and G\^ateaux differentiable, with  
\begin{equation}\label{formuladerivativeofF}
\begin{array}{rcl}
Df(x,u)(z,v) &=&\EE \left( \int_{0}^{T}\left[\ell_{x}(t,x(t),u(t))z(t)+\ell_{u}(t,x(t),u(t))v(t)\right] \dd t \right)\\[6pt]
\; & \; & + \EE\left( D\Phi(x(T))z(T) \right).
\end{array}
\end{equation}
 \end{lemma}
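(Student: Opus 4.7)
The plan is to handle local Lipschitz continuity and Gâteaux differentiability separately, both via the mean value theorem and the growth bounds in \textbf{(A2)}, after first establishing two continuous embeddings that let us pass from the $\I^n$ norm to $L^{2,2}$ and to $L^2(\Omega;\R^n)$ evaluation at $T$. Specifically, if $x=I(x_0,x_1,x_2)\in\I^n$, then by Jensen's inequality for the drift part and Doob's inequality for the diffusion part (as in the proof of Lemma \ref{lemmacota1}), one obtains a constant $c_0>0$ independent of $x$ such that
\begin{equation}\label{embedding_sketch}
\EE\!\left(\sup_{t\in[0,T]}|x(t)|^2\right)\le c_0\,\|x\|_{\I^n}^{2}, \quad\text{hence}\quad \|x\|_{2,2}\le c_0 T\,\|x\|_{\I^n} \text{ and } \EE|x(T)|^2\le c_0\,\|x\|_{\I^n}^2.
\end{equation}

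For the local Lipschitz property, fix $R>0$ and consider $(x_i,u_i)$ in the closed ball of radius $R$ in $\I^n\times(L^{2,2}_{\FF})^m$, $i=1,2$. Writing
$$\ell(t,x_1,u_1)-\ell(t,x_2,u_2)=\int_0^1\!\!\big[\nabla_x\ell(t,x_\theta,u_\theta)\cdot(x_1-x_2)+\nabla_u\ell(t,x_\theta,u_\theta)\cdot(u_1-u_2)\big]\dd\theta,$$
with $(x_\theta,u_\theta):=(1-\theta)(x_2,u_2)+\theta(x_1,u_1)$, the gradient bound in \eqref{crecimientoderivadascosto} gives
$$|\nabla_x\ell|+|\nabla_u\ell|\le c_2\big(\rho_2+|x_1|+|x_2|+|u_1|+|u_2|\big),$$
and by Cauchy--Schwarz in $L^{2,2}$ the integral of this against $|x_1-x_2|+|u_1-u_2|$ is bounded by a constant (depending on $c_2$, $\|\rho_2\|_{2,2}$, $R$ and $c_0$) times $\|x_1-x_2\|_{\I^n}+\|u_1-u_2\|_{2,2}$, using \eqref{embedding_sketch} to absorb $\|x_i\|_{2,2}$. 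An analogous computation for $\Phi$, using the bound on $\nabla_x\Phi$ and the $L^2(\Omega)$-embedding in \eqref{embedding_sketch}, yields the local Lipschitz estimate for~$f$.

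For Gâteaux differentiability at $(x,u)$ in the direction $(z,v)\in\I^n\times(L^{2,2}_{\FF})^m$, I would form the difference quotient
$$\frac{f(x+\tau z,u+\tau v)-f(x,u)}{\tau}=\EE\!\!\int_0^T\!\!\int_0^1\!\!\nabla_{(x,u)}\ell(t,x+\theta\tau z,u+\theta\tau v)\!\cdot\!(z,v)\,\dd\theta\,\dd t+\EE\!\!\int_0^1\!\!\nabla_x\Phi(x(T)+\theta\tau z(T))\!\cdot\! z(T)\,\dd\theta,$$
and pass to the limit as $\tau\downarrow0$ by dominated convergence. The pointwise limits are the integrands in \eqref{formuladerivativeofF} (by the $C^1$ assumption in \textbf{(A2)}), and for $\tau\in(0,1]$ the integrand in the first term is bounded by $c_2(\rho_2+|x|+|z|+|u|+|v|)(|z|+|v|)$, which, thanks to \eqref{embedding_sketch} and Cauchy--Schwarz, lies in $L^1(\Omega\times[0,T])$; an analogous $L^1(\Omega)$ bound governs the $\Phi$ term via the bound on $|\nabla_x\Phi|$. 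Linearity and continuity of the candidate derivative in \eqref{formuladerivativeofF} follow from the same Cauchy--Schwarz argument together with \eqref{embedding_sketch}, completing Gâteaux differentiability.

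The only genuine obstacle is the passage \eqref{embedding_sketch} from the abstract $\I^n$ norm to the pathwise quantities $\|x\|_{2,2}$ and $\|x(T)\|_{L^2}$ appearing after applying the growth bounds; once this is in place, both assertions are routine consequences of the mean value theorem and dominated convergence, and the stated formula \eqref{formuladerivativeofF} is immediate.
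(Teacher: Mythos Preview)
Your proof is correct and follows essentially the same route as the paper's: both use the mean value theorem together with the growth bounds in \textbf{(A2)} and Cauchy--Schwarz to obtain local Lipschitz continuity, and both pass to the limit in the integral form of the difference quotient via dominated convergence for the G\^ateaux derivative. The only visible difference is that you make the embedding \eqref{embedding_sketch} explicit at the outset, whereas the paper leaves the passage from $\|\delta x\|_{2,2}$ and $\EE(|\delta x(T)|^2)$ to $\|\delta x\|_{\I^n}$ implicit (or relegates it to the argument already used in Lemma~\ref{lemmacota1}); this is a presentational rather than a mathematical distinction.
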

 \begin{proof}
For $\tau \in [0,1]$, set $x_{\tau}:= x_1+\tau (x_2-x_1)$, $u_{\tau}:= u_1+\tau (u_2-u_1)$, $\delta x= x_2-x_1$ and $\delta u= u_2-u_1$. We have that
 $$
 \begin{array}{ll}
  |f(x_2,u_2)- f(x_1,u_1)| \leq& \EE \left( \int_{0}^{T}  \int_{0}^{1} \left|D\ell(t,x_\tau(t), u_\tau(t))(\delta x(t), \delta u(t))\right|\dd \tau \dd t\right)  \\[6pt]
 \; &+ \EE \left( \int_{0}^{1} \left|D\Phi(x_{\tau}(T))\delta x(T) \right|\dd \tau \right).\end{array}$$
 By the second assumption in \eqref{crecimientoderivadascosto} we can find $c>0$ such that \small
 $$\begin{array}{rcl}
  \left|D\ell(t,x_\tau(t), u_\tau(t))(\delta x(t), \delta u(t))\right| &\leq& c( 1 + |x_\tau(t)|+ | u_\tau(t)|) (|\delta x(t)|+|\delta u(t)|)\\[6pt]
  									\; 	 &\leq& c( 1 +|x_{1}(t)| +  |\delta x(t)|+ |u_1(t)|+  |\delta u(t)|)(|\delta x(t)|+|\delta u(t)|),
\end{array} $$ \normalsize
which, by the Cauchy-Schwarz inequality,  implies that
$$
\begin{array}{l}
\left[\EE \left( \int_{0}^{T}  \int_{0}^{1} \left|D\ell(t,x_\tau(t), u_\tau(t))(\delta x(t), \delta u(t))\right|\dd \tau \dd t\right)\right]^{2} \leq  \\[6pt]
c' \EE \left( \int_{0}^{T}(1+|x_{1}(t)|^2 +  |\delta x(t)|^{2}  +|u_1(t)|^{2}+  |\delta u(t)|^{2}) \dd t\right)(\| \delta x\|_{2,2}^{2}+ \| \delta u \|_{2,2}^{2}).
\end{array}
$$
Analogously, there exists $c''>0$ such that
$$\left[\EE \left( \int_{0}^{1} \left|D\Phi(x_{\tau}(T))\delta x(T) \right|\dd \tau \right)\right]^{2}\leq c''  \EE \left(1+|x_{1}(T)|^2 +  |\delta x(T)|^{2}) \dd t\right)\EE\left( |\delta x(T)|^{2}\right),$$
from which the local Lipschitz property for $f$ follows.  Now, we prove the formula for the directional derivative. Consider the term
\small
\begin{equation}\label{ratioell} \EE\left( \int_{0}^{T} \left[\frac{\ell(t,x(t)+ \tau z(t), u(t)+ \tau v(t))- \ell(t,x(t)+ \tau z(t), u(t)+ \tau v(t))}{\tau}- D\ell(t,x(t),u(t))\right] \dd t \right). \end{equation}\normalsize
Since $\ell$ is G\^ateaux differentiable, the expression inside the integral converges to zero pointwisely. Now, writing the ratio inside the integral in integral form, if $\tau<1$, we have
$$
\int_{0}^{1} D \ell(t, x_{\gamma \tau}(t), u_{\gamma \tau}(t))(z(t), v(t)) \dd \gamma \leq c(1+ |x(t)|+ |z(t)|+|u(t)|+|v(t)|) (|z(t)|+|v(t)|),
$$
where $x_{\gamma \tau}=x + \gamma \tau z$ and $u_{\gamma \tau}= u+\gamma \tau v$. The term $ D\ell(t,x(t),u(t))$ is dominated by $c(1+|x(t)|+ |u(t)|)$ and thus we can pass to the limit to obtain that the term in \eqref{ratioell} tends to $0$ as $\tau\downarrow 0$. Analogously, as $\tau \downarrow 0$,
$$ \EE\left( \frac{\Phi(x(T)+ \tau z(T))- \Phi(x(T))}{\tau} - D\Phi(x(T)) z(T) \right) \to 0.$$
 Formula \eqref{formuladerivativeofF} follows. \end{proof} \medskip

As customary in optimal control theory,   it is convenient to introduce the 
Hamiltonian $H: \Omega \times ]0,T[ \times \R^{n} \times \R^{n} \times \R^{n \times d} \times \R^{m} \to \R$  defined  as
$$ H(\omega, t, x, p, q, u):= \ell(\omega, t,y,u)+ p \cdot b(\omega, t,x,u)+ \sum_{i=1}^{d} q^{i}\cdot \sigma^{i}(\omega, t,x,u).$$

With the help of Theorem \ref{thm2} and Corollary \ref{calmness_G_stochastic_control_continuous_time} we can prove now  a {\it weak version} of the stochastic Pontryagin's minimum principle  (see {\rm\cite{Peng90}} and Remark \ref{comments_result}{\rm(ii)} below).
\begin{theorem}\label{stochasticpmp} Suppose that $(\bar{x},\bar{u})$ is a local solution of problem $\mathcal(SP)$, then there exists $\bar{p} \in \I^{n}$ and $\bar{q} \in (L^{2,2}_{\FF})^{n\times d}$ such that
\begin{equation}\label{weakpmpsystem}
\begin{array}{c}
\bar{p}(\cdot)= \nabla_{x}\Phi(\bar{x}(T)) + \int_{(\cdot)}^{T}\nabla_{x}H(s, \bar{x}(s),\bar{p}(s), \bar{q}(s),\bar{u}(s))\dd s- \int_{(\cdot)}^{T}\bar{q}(s) \dd W(s), \\[6pt]
\mbox{and } \hspace{0.2cm}\EE\left(\int_{0}^{T} \nabla_{u}H (t, \bar{x}(t),\bar{p}(t), \bar{q}(t),\bar{u}(t))\cdot v(t)\dd t \right) \geq 0 \; \; \mbox{for all } \; v \in  T_{\U}(\bar{u}).
\end{array}\end{equation}
If, in addition, $K(\U,\bar{u})$ is convex, then the second relation in \eqref{weakpmpsystem} is valid for all $v\in K(\U,\bar{u})$.
\end{theorem}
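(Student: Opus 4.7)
The plan is to cast $(SP)$ in the abstract form covered by Theorem \ref{thm2}, take the Lagrange multiplier provided by that theorem, and identify it, via the Riesz representation on the Hilbert space $\I^{n}$, with the adjoint pair $(\bar p,\bar q)$ through an application of It\^o's formula.

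I would first set $X:=\I^{n}\times (L^{2,2}_{\FF})^{m}$, $Y:=\I^{n}$, $C:=\I^{n}\times \U$, and take $f$ and $g$ as defined in \eqref{meqmemnnrnnrnr} and in the paragraph following it. Assumption $({\bf H_f})$ is the content of Lemma \ref{F_lipchitz_gateaux}, while $({\bf H_g})$ together with the local Lipschitz property of $g$ is the content of Lemma \ref{wrnwnrnwnrnqbqbqbq}. Calmness of the constraint system at $(\bar x,\bar u)$ follows from Corollary \ref{calmness_G_stochastic_control_continuous_time}. Since $N_{\I^{n}\times \U}(\bar x,\bar u)=\{0\}\times N_{\U}(\bar u)$, Theorem \ref{thm2}{\rm(ii)} produces $y^{\ast}\in (\I^{n})^{\ast}$ and $\eta_{u}\in N_{\U}(\bar u)$ such that
\begin{equation}\label{abstract_OC_proposal}
Df(\bar x,\bar u)(z,v)+\langle y^{\ast},Dg(\bar x,\bar u)(z,v)\rangle + \langle \eta_{u},v\rangle = 0 \quad \forall\,(z,v)\in \I^{n}\times (L^{2,2}_{\FF})^{m}.
\end{equation}

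The main technical step is to identify $y^{\ast}$ with an element $\bar p\in \I^{n}$ via the Riesz map. Writing the It\^o decomposition $\bar p(\cdot)=\bar p(0)+\int_{0}^{(\cdot)}\bar{\alpha}(s)\dd s + \int_{0}^{(\cdot)}\bar q(s)\dd W(s)$, I would take $v=0$ in \eqref{abstract_OC_proposal} and test against an arbitrary $z\in \I^{n}$ with independent initial value, drift and diffusion. Using the explicit expression of $Dg$ given by \eqref{mmsnbabbasvvwwwaaass} and of $Df$ given by \eqref{formuladerivativeofF}, and rewriting the terminal term $\EE[\nabla_{x}\Phi(\bar x(T))\cdot z(T)]$ by means of It\^o's formula applied to $\bar p\cdot z$ (the stochastic integrals vanish in expectation by the bounds in \eqref{acotamientoderivadas}--\eqref{crecimientoderivadascosto} and Lemma \ref{lemmacota1}), the resulting identity becomes linear in the three free data of $z$, and their arbitrariness forces $\bar p(T)=\nabla_{x}\Phi(\bar x(T))$ and $\bar{\alpha}=-\nabla_{x}H(\cdot,\bar x,\bar p,\bar q,\bar u)$, which is exactly the BSDE in \eqref{weakpmpsystem}.

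Finally, setting $z=0$ in \eqref{abstract_OC_proposal} and using \eqref{mmsnbabbasvvwwwaaass}--\eqref{formuladerivativeofF} together with the above identification yields
\begin{equation*}
\EE\left(\int_{0}^{T}\nabla_{u}H(t,\bar x,\bar p,\bar q,\bar u)\cdot v(t)\dd t\right) = -\langle \eta_{u},v\rangle \quad \forall\,v\in (L^{2,2}_{\FF})^{m}.
\end{equation*}
Because $\eta_{u}\in N_{\U}(\bar u)$ and $T_{\U}(\bar u)=N_{\U}(\bar u)^{0}$ (by the bipolar theorem applied to the closed convex Clarke tangent cone), the right-hand side is nonnegative for every $v\in T_{\U}(\bar u)$, establishing the variational inequality in \eqref{weakpmpsystem}. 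When $K(\U,\bar u)$ is convex, the last assertion of Theorem \ref{thm2}{\rm(ii)} yields instead a multiplier $\eta_{u}\in (K(\U,\bar u))^{0}$, and polarity extends the inequality to every $v\in K(\U,\bar u)$. The core difficulty of the argument lies in the It\^o identification of $y^{\ast}$ with the BSDE pair $(\bar p,\bar q)$; once the relevant stochastic integrals are shown to be true martingales, the remaining steps amount to an arbitrariness-of-components argument in $\I^{n}$.
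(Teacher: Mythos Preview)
Your overall architecture matches the paper's: verify $({\bf H_f})$, $({\bf H_g})$ and calmness via Lemmas \ref{wrnwnrnwnrnqbqbqbq}, \ref{F_lipchitz_gateaux} and Corollary \ref{calmness_G_stochastic_control_continuous_time}, invoke Theorem \ref{thm2}{\rm(ii)} to obtain a multiplier in $\I^{n}$, then split into state and control components. The paper does not perform the identification itself; it writes the multiplier as $\lambda=(\lambda_{0},\lambda_{1},\lambda_{2})$ under the isomorphism $\I^{n}\cong\R^{n}\times(L^{2,2}_{\FF})^{n}\times(L^{2,2}_{\FF})^{n\times d}$, sets $\bar p:=\lambda_{1}$, $\bar q:=\lambda_{2}$, and quotes \cite[Theorem 3.12]{backhoffsilva14} for the equivalence between \eqref{optimality_conditions_separated} and \eqref{weakpmpsystem}.

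Your direct It\^o identification, however, contains a genuine error. You take $\bar p$ to be the Riesz representative of $y^{*}$ \emph{as an It\^o process}, with drift $\bar\alpha$ and diffusion $\bar q$, and claim that arbitrariness of $(z_{0},z_{1},z_{2})$ forces $\bar p(T)=\nabla_{x}\Phi(\bar x(T))$ and $\bar\alpha=-\nabla_{x}H(\cdot,\bar x,\bar p,\bar q,\bar u)$. This is not what the computation yields: because the $\I^{n}$-pairing matches $\lambda$ with $Dg(\bar x,\bar u)(z,0)$ componentwise, the drift component $\lambda_{1}$ (your $\bar\alpha$) is what multiplies $b_{x}z$, so the Hamiltonian term that appears is $\nabla_{x}H(\cdot,\bar x,\lambda_{1},\lambda_{2},\bar u)$, not $\nabla_{x}H$ evaluated at your process $\bar p$. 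A one-line check: with $b=\sigma=\ell=0$ and deterministic $\Phi$, the Riesz representative of $D_{x}f(\bar x,\bar u)$ has components $(\Phi'(\hat x_{0}),\Phi'(\hat x_{0}),0)$, so as an It\^o process it is $t\mapsto\Phi'(\hat x_{0})(1+t)$, which plainly does not solve the adjoint BSDE $\bar p\equiv\Phi'(\hat x_{0})$. The correct identification is $\bar p=\lambda_{1}$, $\bar q=\lambda_{2}$; a self-contained argument then either (i) solves the BSDE for $(\bar p,\bar q)$, uses It\^o on $\bar p\cdot z$ to verify that $(\bar p(0),\bar p,\bar q)$ satisfies $D_{x}f+D_{x}g^{*}\lambda=0$, and concludes by uniqueness of the multiplier (injectivity of $D_{x}g^{*}$ from Lemma \ref{uniformsurjectivity}), or (ii) tests the abstract condition with $z_{1}$ and $z_{2}$ separately and uses conditional expectations plus martingale representation to extract the BSDE for $\lambda_{1}$. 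Either route works, but not the one you wrote.
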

\begin{proof}
Lemma \ref{wrnwnrnwnrnqbqbqbq}  and Lemma \ref{F_lipchitz_gateaux} imply that $g$ and $f$ satisfy the assumptions ${\bf (H_g)}$ and ${\bf (H_f)}$, respectively. Since Corollary \ref{calmness_G_stochastic_control_continuous_time} implies that $g$ is calm at $(\bar{x},\bar{u})$, Theorem \ref{thm2} yields  the existence of a Lagrange multiplier $\lambda \in \I^{n}$ such that
$$0 \in Df(\bar{x},\bar{u}) + Dg(\bar{x},\bar{u})^{\ast} \lambda   +  \{0\}\times  N_{\U}(\bar{u}),$$
which can be written as 
\begin{equation}\label{optimality_conditions_separated}
\begin{array}{rcl}
  D_x f(\bar{x},\bar{u}) + D_x g(\bar{x},\bar{u})^{\ast} \lambda&=&0, \\[6pt]
 \left\langle D_u f(\bar{x},\bar{u}) + D_u g(\bar{x},\bar{u})^{\ast} \lambda, v \right\rangle_{(L^{2,2}_{\FF})^m} &\geq &0  \hspace{0.3cm} \forall \; v\in T_{\U}(\bar{u}).
\end{array}
\end{equation}
Setting  $\bar{p}=\lambda_1$ and $\bar{q}=\lambda_2$,  \cite[Theorem 3.12]{backhoffsilva14} implies that the first and second relations in \eqref{weakpmpsystem}  are equivalent to the corresponding relations in   \eqref{optimality_conditions_separated}.  Finally, if $K(\U,\bar{u})$ is convex, by Theorem \ref{thm2} we have  
$$0 \in Df(\bar{x},\bar{u}) + Dg(\bar{x},\bar{u})^{\ast} \lambda   +  \{0\}\times K(\U,\bar{u})^0.$$
Reasoning as before, we have that the second relation in \eqref{weakpmpsystem} is valid for all $v\in K(\U,\bar{u})$. The result follows. 
\end{proof}
\subsection{Comments and extensions}\label{comments_result}
Let us provide some comments on the previous result. 
\begin{itemize}
\item[{\rm(i)}] As pointed out in {\rm\cite{backhoffsilva14}}, it is not clear that in general the function $g$ defined in \eqref{meqmemnnrnnrnr} is $C^1$. Therefore, standard Lagrange multiplier results, in infinite dimensions, are not directly applicable to problem $(SP)$. The results presented in Section \ref{optimality_conditions_abstract_form} and in Section \ref{metric_regularity}  allow us to overcome this difficulty. 
\item[{\rm(ii)}]  It is possible to prove Theorem \ref{stochasticpmp} by following a different strategy that does not involve the Lagrange multiplier theory.  In order to simplify the discussion, we suppose that no constraints are imposed on the controls, i.e. $\U= (L^{2,2}_{\FF})^m$, and refer the interested reader to {\rm\cite{Bonnans_Silva_stochastic}} for the detailed presentation in the general case. Assumption {\bf(A1)} implies that for each $u \in (L^{2,2}_{\FF})^m$, the equation $g(x,u)=0$ admits a unique solution $x[u] \in \I^n$. As a consequence, problem $(SP)$ can be rewritten as the unconstrained optimization problem  
$$
\inf \left\{ J(u):= f(x[u],u) \; ; \; \;  \mbox{{\rm s.t.}} \; \; \;  u \in  (L^{2,2}_{\FF})^m\right\}.  \eqno(SP')
$$ 
If $\bar{u}$ is a local solution of $(SP')$, then it is possible to provide a first order expansion of  $v\in (L^{2,2}_{\FF})^m \to J(\bar{u}+v)$ if $v$ is progressively-measurable and essentially bounded.  By defining $(\bar{p}, \bar{q})$ by the first relation in \eqref{weakpmpsystem} {\rm(}which can be justified by the results in {\rm\cite{Bismut_linear_quadratic})}, the aforementioned expansion of $J$ implies that the second relation in \eqref{weakpmpsystem} holds for every essentially bounded $v$ and so, by a density argument, for every $v\in (L^{2,2}_{\FF})^m$. Even if this approach provides another proof of Theorem \ref{stochasticpmp}, the latter is considerably more technical than the one presented in this article and does not provide the explicit relation between of $\bar{p}$ and $\bar{q}$ and the Lagrange multiplier $\lambda$ associated to the SDE defining the controlled trajectories.

\item[{\rm(iii)}] In the particular case of pointwise control constraints
$$
\U:= \{ u \in (L^{2,2}_{\FF})^m \; ; \; u(\omega,t) \in U \; \; \mbox{a.s}\},
$$
where $U\subseteq \R^m$ is a nonempty closed set, a   result stronger than Theorem \ref{stochasticpmp} has been shown in {\rm\cite{Peng90}}. In this paper,  the author shows that a variation of the Hamiltonian $H$, which involves an additional pair of adjoint processes, is almost surely  pointwisely minimized  at $\bar{u}(\omega,t)$. In this result, no regularity assumptions on the data with respect to  $u$ are imposed. On the other hand, stronger assumptions  with respect to the dependence on the state variable $x$ are assumed {\rm(}which involve strong requirements on the second order derivatives of $\ell$, $\Phi$, $b$ and $\sigma${\rm)}. 
\item[{\rm(iv)}] A straightforward extension of Theorem \ref{stochasticpmp} is the case where the initial point $\hat{x}_0$ is also a decision variable. More precisely, let $\mathcal{X}_0 \subseteq \R^n$ be a closed set and consider the following extension of problem $(SP)$ 
$$\left.\begin{array}{l} \inf_{x,\hat{x}_0,u} \; \EE\left( \int_0^T \ell(\omega,t,x(t),u(t)) \dd t + \Phi(\omega, x(T)) \right)\\[8pt]
\mbox{s.t. } \hspace{0.2cm}\dd x(t) = b(\omega,t,x(t),u(t)) \dd t + \sigma(\omega,t,x(t),u(t)) \dd W(t) \hspace{0.3cm} t\in (0,T), \\[6pt]
\hspace{1cm} x(0)=\hat{x}_0 \in \mathcal{X}_0,\\[6pt]
\hspace{1cm} u \in \mathcal{\U}.
\end{array}\right\} \eqno(SP')$$
Then, this problem can be written in the abstract form 
$$ \inf \; \; f(x,u) \; \;  \mbox{subject to } \;  \tilde{g}(x,u)\in   \I^n \times \mathcal{X}_0, \; \; \;  u\in \U, \eqno(SP')$$
where \small
$$\tilde{g}(x,u):= \left( x(0)+  \int_{0}^{(\cdot)} b(s,x(s),u(s)) \dd s +  \int_{0}^{(\cdot)} \sigma(s,x(s), u(s)) \dd W(s)-x(\cdot),x(0)\right). $$
\normalsize
Suppose that  $(\bar{x},\bar{u}) \in \I^n \times \mathcal{U}$ is a local solution to $(SP')$ and assume that {\bf(A1)}-{\bf(A2)} hold true.  Using the surjectivity property of the derivative of the first coordinate of $\tilde{g}$ (as in Lemma \ref{uniformsurjectivity}), it is easy to check that \eqref{open1} in ${ \bf(H_{cq}')}$ is satisfied at $(\bar{x},\bar{u})$ (with $C= \I^n \times \mathcal{U}$ and $D= \I^n \times \mathcal{X}_0$). Thus, by Theorem \ref{casDconvexe}, Theorem \ref{thm2_b}, and reasoning as in the proof of Theorem \ref{stochasticpmp}, we obtain the existence of $\bar{p} \in \I^{n}$ and $\bar{q} \in (L^{2,2}_{\FF})^{n\times d}$ such that
\begin{equation}\label{weakpmpsystem_variable_initial_condition}
\begin{array}{l}
\bar{p}(\cdot)= \nabla_{x}\Phi(\bar{x}(T)) + \int_{(\cdot)}^{T}\nabla_{x}H(s, \bar{x}(s),\bar{p}(s), \bar{q}(s),\bar{u}(s))\dd s- \int_{(\cdot)}^{T}\bar{q}(s) \dd W(s), \\[6pt]
-\bar{p}(0) \in N_{\mathcal{X}_0}(\bar{x}(0)),\\[6pt]
\mbox{and } \hspace{0.2cm}\EE\left(\int_{0}^{T} \nabla_{u}H (t, \bar{x}(t),\bar{p}(t), \bar{q}(t),\bar{u}(t))\cdot v(t)\dd t \right) \geq 0 \; \; \mbox{for all } \; v \in  T_{\U}(\bar{u}).
\end{array}\end{equation}

\item[{\rm(v)}] Another easy extension is the case where finitely many final constraints on the state, in expectation form, are added to problem $(SP)$. In this case, a qualification condition has to be imposed on the local solution $(\bar{x},\bar{u})$ in order to ensure that ${ \bf(H_{cq}')}$ holds. We refer the reader to {\rm\cite{backhoffsilva14}} for a more detailed discussion on this matter. The case of final pointwise constrains having the form $x(\omega,T) \in \mathcal{X}_{T}$, for some closed set $\mathcal{X}_T \subseteq  \R^n$, and with probability one,  remains as an interesting open problem. 
\end{itemize}

\section{Application to a class of stochastic control problems in discrete time}\label{section_discrete_time}
Let $(\Omega, \F, \PP)$ be a probability space and, as in the previous section, denote by $\EE$ the expectation under $\PP$. Let $w_1, \hdots, w_{N}$ be $N$ independent  $\R^d$-valued random variables defined in $(\Omega, \F, \PP)$ such that for all $k=1, \hdots, N$ the coordinates of $w_k=(w_k^1, \hdots, w_k^d)$ are independent and satisfy
$$
\EE(w_k^i)= 0, \hspace{0.3cm} \EE(|w_k^i|^2)= 1.
$$
Define  $w_0:= 0$ and for $k=0,\hdots, N$ set  $\F_{k}:=\sigma\left(w_0, \hdots, w_{k}\right)$, the sigma-algebra generated by $w_0, \hdots, w_{k}$, and  
$$
L^{2}_{\F_k}:= \left\{ y\in L^{2}(\Omega) \; ;  \; y \;   \mbox{is $\F_k$ measurable} \right\}.
$$
Let $\U \subseteq  \Pi_{k=0}^{N-1} (L^{2}_{\F_k})^m$ be a non-empty closed set. In this section we consider the following discrete-time stochastic optimal control problem (see \cite{LinZhang})
$$
\left.
\begin{array}{l}\inf  \;   \EE\left(\sum_{k=0}^{N-1}\ell(k,x_k,u_k)  + \Phi(x_N)\right)\\[8pt]
\mbox{s.t. } \hspace{0.2cm}  x_{k+1}= b(k,x_k, u_k) + \sigma(k,x_k, u_k)w_{k+1} \hspace{0.5cm} k=0, \hdots, N-1  \\[8pt]
\hspace{0.85cm} x_0= \hat{x}_0\in \R^n \\[8pt] 
\hspace{0.85cm} x \in \Pi_{k=0}^{N} (L^{2}_{\F_k})^n, \; \;  u \in \U, 
\end{array}\right\} \eqno(SP_d)
$$
where, denoting $ [0: N-1]:=\{0,\hdots, N-1\}$, $\ell: [0: N-1] \times \R^{n} \times \R^{m} \to \R$, $\Phi: \R^{n} \to \R$, $b: [0: N-1] \times \R^{n} \times \R^{m} \to \R^{n}$ and $\sigma: [0: N-1] \times \R^{n} \times \R^{m} \to \R^{n\times d}$   are Borel measurable functions.  Denoting $\sigma^{j}$ ($j=1,\hdots, d$) the $j$th column of $\sigma$, for $\psi=b$, $\sigma^j$ we suppose that $\psi$ is $\mathcal{C}^1$ with respect to $(x,u)$ and the existence of $c_{1}>0$ such that for all $k\in [0:N-1]$
\begin{equation}\label{crecimientoderivadasdinamica_caso_discreto}\left\{\begin{array}{c} |\psi(k,x,u)|\leq  c_{1} \left(  1+ |x|+|u|\right), \\[4pt]
																			|\psi_{x}(k,x,u)|+ |\psi_{u}(k,x,u)| \leq c_{1}.  				
\end{array}\right.
\end{equation}
Similarly, in the remainder of this section we will assume that there exists $c_2>0$ such that for all $k\in [0:N-1]$
\begin{equation}\label{crecimientoderivadascosto_caso_discreto}\left\{\begin{array}{c}|\ell (k,x,u)|\leq  c_{2} \left( 1+ |x|+|u|\right)^{2}, \\[4pt]
 |\ell_{x}(k,x,u)|+ |\ell_{u}(k,x,u)|\leq  c_{2} \left( 1+ |x|+|u|\right), \\[4pt]
 \ |\Phi (x)|\leq  c_{2} \left( 1+ |x|\right)^{2}, \; |\Phi_{x}(x)|\leq  c_{2} \left( 1+ |x|\right).
 \end{array}\right.\end{equation}
As in Section \ref{section_stochastic_control_continuous_time} we introduce now a Hilbert space for the state $x$ which is suitable for the application of the results in Sections \ref{optimality_conditions_abstract_form} and \ref{metric_regularity}.  Set $X_0=\R^n$ and  given $k\in [1:N]$ define 
$$
X_k:= \left\{ y^{0}_{k-1} +  \sum_{i=1}^{d} y^{i}_{k-1}w_k^{i} \; ; \;  y_{k-1}^i \in   \left(L^{2}_{\F_{k-1}}\right)^{n} \hspace{0.2cm} \; \forall \; i=0,\hdots,d \right\}.
$$
Endowed with the scalar product  
$$
\langle x, x' \rangle_{X_k} := \EE\left(  \sum_{i=0}^{d}  y_{k-1}^{i} \cdot  z_{k-1}^{i}   \right) \hspace{0.4cm} \forall \;  x=y_{k-1}^{0}+ \sum_{i=1}^dy_{k-1}^{i} w_{k}^i  , \; \; \;  x'=z_{k-1}^{0}+ \sum_{i=1}^dz_{k-1}^{i}  w_{k}^i, 
$$
the following elementary result shows that $X_k$ is a Hilbert space. 
\begin{lemma}For every $(y_{k-1}^0,y_{k-1}^1,\hdots, y_{k-1}^d) \in (L^{2}_{\F_{k-1}})^n \times (L^{2}_{\F_{k-1}})^{n \times d}$ we have
\begin{equation}\label{isometry}
\EE\left( \left|y_{k-1}^{0}+ \sum_{i=1}^dy_{k-1}^{i} w_{k}^i\right|^2 \right)= \sum_{i=0}^{d} \EE\left(|y_{k-1}^{i}|^2\right).
\end{equation}
As a consequence, for every $k\in [1:N]$ the linear operator $I: (L^{2}_{\F_{k-1}})^n \times (L^{2}_{\F_{k-1}})^{n \times d} \to X_k$ defined as 
$$
I(y_{k-1}^0,y_{k-1}^1,\hdots, y_{k-1}^d) :=  y^{0}_{k-1} +  \sum_{i=1}^{d} y^{i}_{k-1}w_k^{i},
$$
is a bijection.   
\end{lemma}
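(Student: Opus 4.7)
The plan is to prove the identity \eqref{isometry} first, and then deduce the bijectivity of $I$ essentially for free from the isometry. The argument relies only on independence and the prescribed first and second moments of $w_k$.

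For the isometry, I would expand the squared norm inside the expectation as
$$
\left|y_{k-1}^{0}+ \sum_{i=1}^dy_{k-1}^{i} w_{k}^i\right|^2 = |y_{k-1}^{0}|^2 + 2\sum_{i=1}^{d} (y_{k-1}^{0}\cdot y_{k-1}^{i}) w_{k}^{i} + \sum_{i,j=1}^{d} (y_{k-1}^{i}\cdot y_{k-1}^{j}) w_{k}^{i}w_{k}^{j},
$$
and then take expectations term by term. The cross-terms vanish: since each $y_{k-1}^{i}$ is $\F_{k-1}$-measurable while $w_{k}$ is independent of $\F_{k-1}$, one has $\EE((y_{k-1}^{0}\cdot y_{k-1}^{i})w_{k}^{i})=\EE(y_{k-1}^{0}\cdot y_{k-1}^{i})\,\EE(w_{k}^{i})=0$ because $\EE(w_{k}^{i})=0$. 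For the quadratic block, the same factorisation together with the independence of the coordinates of $w_k$ among themselves gives, for $i\neq j$, $\EE((y_{k-1}^{i}\cdot y_{k-1}^{j}) w_{k}^{i}w_{k}^{j})=\EE(y_{k-1}^{i}\cdot y_{k-1}^{j})\,\EE(w_{k}^{i})\EE(w_{k}^{j})=0$, and for $i=j$ it gives $\EE(|y_{k-1}^{i}|^{2}(w_{k}^{i})^{2})=\EE(|y_{k-1}^{i}|^{2})$, using $\EE(|w_{k}^{i}|^{2})=1$. Summing yields the desired identity.

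For the bijectivity of $I$, surjectivity is immediate from the very definition of $X_k$: every element of $X_k$ is, by construction, of the form $y_{k-1}^{0}+\sum_{i=1}^{d}y_{k-1}^{i}w_{k}^{i}$ for some tuple in $(L^{2}_{\F_{k-1}})^n\times (L^{2}_{\F_{k-1}})^{n\times d}$. For injectivity, suppose $I(y_{k-1}^{0},\ldots,y_{k-1}^{d})=0$ in $X_k$; squaring and taking expectations and then invoking \eqref{isometry}, we obtain $\sum_{i=0}^{d}\EE(|y_{k-1}^{i}|^{2})=0$, hence each $y_{k-1}^{i}=0$ almost surely. Thus $\ker I=\{0\}$, and $I$ is a bijection.

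The argument is essentially routine; the only subtle point is the careful application of the independence between $w_{k}$ and $\F_{k-1}$ together with the mutual independence of the coordinates $w_{k}^{1},\ldots,w_{k}^{d}$ in handling the mixed terms, so I would take care to spell those factorisations out explicitly. Note also that \eqref{isometry} shows that $I$ is an isometry (up to the obvious identification of norms), which makes the identification of $X_k$ with $(L^{2}_{\F_{k-1}})^{n}\times (L^{2}_{\F_{k-1}})^{n\times d}$ isometric, a fact that will be useful later when applying the abstract results of Sections~\ref{optimality_conditions_abstract_form} and \ref{metric_regularity} to $(SP_d)$.
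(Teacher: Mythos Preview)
Your proof is correct and follows essentially the same approach as the paper: expand the square, kill the cross terms using the moment conditions on $w_k$, and deduce injectivity from the resulting isometry. The only cosmetic difference is that the paper phrases the vanishing of the cross terms via conditional expectations (e.g.\ $\EE(y_{k-1}^{0}\cdot y_{k-1}^{i}\,\EE(w_k^i\mid\F_{k-1}))=0$) rather than direct independence factorisation, but since $w_k$ is independent of $\F_{k-1}$ these are the same computation.
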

\begin{proof} Relation \eqref{isometry} follows directly from the relations
$$\begin{array}{l}
\EE\left( y_{k-1}^{0} \cdot y_{k-1}^{i} w_{k}^i\right)=\EE\left( y_{k-1}^{0} \cdot y_{k-1}^{i} \EE\left(w_{k}^i|\F_{k-1}\right)\right)=0 \hspace{0.3cm} \forall \; i \in [1:d], \\[6pt]
\EE\left( y_{k-1}^{i} \cdot y_{k-1}^{j} w_{k}^jw_{k}^i \right)=\EE\left( y_{k-1}^{i} \cdot y_{k-1}^{j} \EE\left(w_{k}^jw_{k}^i|\F_{k-1}\right) \right)= \left\{\begin{array}{ll} \EE\left( |y_{k-1}^{i}|^2\right) & \mbox{if } i= j, \\
									0 & \mbox{otherwise}. \end{array}\right.
\end{array}
$$

By definition of $X_k$ we only need to show that $I$ is injective. But this is clear because if 
$$
I(y_{k-1}^0,y_{k-1}^1,\hdots, y_{k-1}^d)=0,
$$
then \eqref{isometry} implies that $\EE\left(|y_{k-1}^i|^2\right)=0$  and so $y_{k-1}^i=0$ a.e. for all $i\in [0:d]$.
%
%
\end{proof}\smallskip\\

 Define $g: \Pi_{k=0}^{N} X_k \times  \Pi_{k=0}^{N-1} \left(L^{2}_{\F_k}\right)^m \to \Pi_{k=0}^{N} X_k$ as 
$$\begin{array}{rcl}
g_0(x,u)&:=& \hat{x}_0 - x_0, \\[6pt]
 g_{k+1}(x,u) &:=& b(k, x_k, u_k) + \sigma(k,x_k,u_k) w_{k+1} - x_{k+1} \hspace{0.5cm} \forall \;  k=0,\hdots, N-1, 
\end{array}$$
and $f: \Pi_{k=0}^{N} X_k \times  \Pi_{k=0}^{N-1}   \left(L^{2}_{\F_k}\right)^m \to \R$ as
$$
f(x,u):=\EE\left(\sum_{k=0}^{N-1}\ell(k,x_k,u_k)  + \Phi(x_N)\right).
$$
Under these notations, problem $(SP_d)$ can be rephrased as 
$$ \inf \; \; f(x,u) \; \;  \mbox{subject to } \;  g(x,u)=0, \; \; \;  u\in \U. \eqno(SP_d)$$
As in the previous section, we prove now that if we set $X:=\Pi_{k=0}^{N} X_k \times  \Pi_{k=0}^{N-1} \left(L^{2}_{\F_k}\right)^m$, then under our assumptions the mappings $f$ and $b$ satisfy the assumptions   in Section  \ref{optimality_conditions_abstract_form}. 
\begin{lemma}\label{gateaux_differentiability_dynamics_discrete_time} The following assertions hold true:\smallskip\\
{\rm(i)} The mapping $g$ is  Lipschitz and G\^ateaux differentiable. For $(x,u)$, $(z,v)\in X$ the directional derivative of $g$ at $(x,u)$ in the direction $(z,v)$ is given by $Dg(x,u)(z,v)= \left(Dg_0(x,u)(z,v), \hdots, Dg_N(x,u)(z,v)\right)$, where
\begin{equation}\label{gateaux_derivative_discrete_case}\begin{array}{rcl}
Dg_0(x,u)(z,v)&=& -z_0,\\[6pt]
 Dg_{k+1}(x,u)(z,v)&=&  b_{(x,u)}(k,x_k,u_k)(z_k,v_k)+ \\[6pt]
 \; & \; & \sum_{i=1}^{d} \sigma_{(x,u)}^i(k,x_k,u_k)(z_k,v_k) w_{k+1}^{i} - z_{k+1},
\end{array}
\end{equation}
for all $k=0,\hdots,N-1$. \smallskip\\
{\rm(ii)} The mapping $f$ is locally Lipschitz and G\^ateaux differentiable, with 
\begin{equation}\label{gateaux_differentiability_cost_discrete_time_expression}
Df(x,u)(z,v)= \EE\left( \sum_{k=0}^{N-1} \ell_{(x,u)}(k,x_k,u_k)(z_k,v_k) + D\Phi(x_N)z_N \right),
\end{equation}
for all $(x,u)$, $(z,v)\in X$.
\end{lemma}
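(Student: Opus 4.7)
The plan is to treat the two parts in parallel with the continuous-time proofs of Lemma~\ref{wrnwnrnwnrnqbqbqbq} and Lemma~\ref{F_lipchitz_gateaux}, exploiting the fact that the identity \eqref{isometry} makes norm computations in $X_{k+1}$ reduce to ordinary $L^{2}(\Omega)$ computations on the coefficients of the Wiener-chaos-like decomposition. More precisely, if $x_{k+1}\in X_{k+1}$ is uniquely written as $x_{k+1}=x^{0}_{k}+\sum_{i=1}^{d}x^{i}_{k}w^{i}_{k+1}$ with $x^{i}_{k}\in (L^{2}_{\F_{k}})^{n}$, then since $b(k,x_{k},u_{k})\in (L^{2}_{\F_{k}})^{n}$ and $\sigma^{i}(k,x_{k},u_{k})\in (L^{2}_{\F_{k}})^{n}$ are $\F_{k}$-measurable (note $x_{k}\in X_{k}\subset L^{2}_{\F_{k}}$), applying \eqref{isometry} yields
\begin{equation*}
\|g_{k+1}(x,u)\|_{X_{k+1}}^{2}=\EE\bigl(|b(k,x_{k},u_{k})-x^{0}_{k}|^{2}\bigr)+\sum_{i=1}^{d}\EE\bigl(|\sigma^{i}(k,x_{k},u_{k})-x^{i}_{k}|^{2}\bigr).
\end{equation*}
All subsequent estimates will be obtained from this formula.

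For part (i), I would first prove the Lipschitz property. Given $(x,u),(x',u')\in X$, I apply the identity above to $g_{k+1}(x,u)-g_{k+1}(x',u')$, use the bounds on $|\psi_{x}|,|\psi_{u}|$ in \eqref{crecimientoderivadasdinamica_caso_discreto} (which yield global $c_{1}$-Lipschitz continuity of $b$ and each $\sigma^{i}$ in $(x,u)$), and then again invoke \eqref{isometry} to bound $\EE(|x_{k}-x_{k}'|^{2})=\|x_{k}-x_{k}'\|_{X_{k}}^{2}$. Summing over $k\in[0:N-1]$ and adding the trivial $k=0$ term gives a Lipschitz constant depending only on $c_{1}$ and $N$. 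For G\^ateaux differentiability, for fixed $(x,u),(z,v)\in X$ and $\tau>0$ I would consider
\begin{equation*}
\Delta_{\tau}^{k+1}:=\frac{g_{k+1}(x+\tau z,u+\tau v)-g_{k+1}(x,u)}{\tau}-Dg_{k+1}(x,u)(z,v),
\end{equation*}
decompose $\|\Delta_{\tau}^{k+1}\|_{X_{k+1}}^{2}$ via \eqref{isometry} into a $b$-term and $d$ $\sigma^{i}$-terms, and apply the dominated convergence theorem to each: pointwise convergence of the integrands to $0$ follows from the $C^{1}$-regularity of $b$ and $\sigma^{i}$, while the uniform $L^{2}$-domination is ensured by the bound $c_{1}(|z_{k}|+|v_{k}|)$ on the difference quotients coming from the mean-value theorem combined with \eqref{crecimientoderivadasdinamica_caso_discreto}.

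For part (ii) the scheme is analogous and even simpler since the codomain is $\R$. The local Lipschitz property is obtained by writing $f(x,u)-f(x',u')$ as the integral of its derivative along the segment joining $(x,u)$ and $(x',u')$, estimating each integrand via the linear-growth bounds on $\ell_{(x,u)}$ and $\Phi_{x}$ in \eqref{crecimientoderivadascosto_caso_discreto}, and applying Cauchy--Schwarz together with $\EE(|x_{k}|^{2})=\|x_{k}\|_{X_{k}}^{2}$ (from \eqref{isometry}). G\^ateaux differentiability at $(x,u)$ in direction $(z,v)$ is obtained by writing each of the $N+1$ difference quotients (one per stage $\ell(k,\cdot,\cdot)$ plus the terminal $\Phi$) in integral-of-derivative form and passing to the limit by dominated convergence: pointwise convergence is immediate from the $\mathcal{C}^{1}$-assumption, and the dominating functions $c_{2}(1+|x_{k}|+|z_{k}|+|u_{k}|+|v_{k}|)(|z_{k}|+|v_{k}|)$ and $c_{2}(1+|x_{N}|+|z_{N}|)|z_{N}|$ are integrable by \eqref{crecimientoderivadascosto_caso_discreto}, Cauchy--Schwarz, and \eqref{isometry}.

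I expect no serious obstacle here: the discrete-time framework removes the stochastic integral and replaces It\^o's isometry/Doob inequalities by the finite-dimensional \eqref{isometry}, so the argument is strictly more elementary than the continuous-time version already carried out in Lemmas~\ref{wrnwnrnwnrnqbqbqbq} and \ref{F_lipchitz_gateaux}. The only point requiring minor care is bookkeeping: ensuring at each step that the $\F_{k}$-measurability of the coefficients allows the use of \eqref{isometry} and that the sum over $k\in[0:N-1]$ preserves the estimates, both of which are straightforward.
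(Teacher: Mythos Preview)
Your proposal is correct and follows essentially the same approach as the paper's proof: decompose each $\|g_{k+1}(x,u)-g_{k+1}(x',u')\|_{X_{k+1}}^{2}$ via the isometry \eqref{isometry}, use the global Lipschitz bounds from \eqref{crecimientoderivadasdinamica_caso_discreto} for the Lipschitz property, and establish G\^ateaux differentiability by dominated convergence on each $\psi$-term with the Lipschitz bound providing the majorant. The paper is in fact terser---it omits the $-x_{k+1}$ contribution in its displayed Lipschitz computation and simply declares assertion {\rm(ii)} ``analogous''---so your more explicit bookkeeping (including the $x_{k}^{0},x_{k}^{i}$ coefficients and the detailed treatment of $f$ modeled on Lemma~\ref{F_lipchitz_gateaux}) is, if anything, a slight improvement in rigor over the original.
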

\begin{proof} We only prove assertion {\rm(i)} since the proof of {\rm(ii)} is analogous. By the second relation in assumption \eqref{crecimientoderivadasdinamica_caso_discreto}, there exists $c>0$ such that for all $k=0,\hdots, N-1$,
$$\begin{array}{l}
\|g_{k+1}(x^1,u^1) -  g_{k+1}(x^2,u^2)\|_{X_{k+1}}^{2}\\[6pt]
=\EE\left(|b(k, x^1_k,u^1_k) - b(k, x^2_k,u^2_k)|^ 2 + \sum_{i=1}^{d}|\sigma^i(k, x^1_k,u^1_k) - \sigma^i(k, x^2_k,u^2_k)|^ 2 \right)\\[6pt]
\leq c \EE\left( |x_k^1 -  x_k^2|^2 + |u_k^1 -  u_k^2|^2\right)= c\left( \|x_k^1 -  x_k^2\|_{X_k}^2 + \|u_k^1- u_k^2\|_{L_{\F_k}^{2}}^2 \right),
\end{array}
$$
where the last equality follows from \eqref{isometry}. The Lipschitz continuity  of $g$ easily follows. Now, for $\psi=b, \sigma^{i}$ ($i=1,\hdots, d$) we have
$$\begin{array}{l}
\EE \left( \frac{\psi(k,x_k +\tau z_k, u_k+ \tau v_k)-\psi(k,x_k, u_k)}{\tau} - \psi_{x}(k,x_k,u_k)z_k- \psi_{u}(k,x_k,u_k)v_k \right)^2 \to 0,
\end{array}
$$
by the Lipschitz continuity of $\psi(k,\cdot,\cdot)$ and the Lebesgue's dominated convergence theorem. The continuity of the linear mapping $(z,v) \to Dg(x,u)(z,v)$ follows easily from \eqref{gateaux_derivative_discrete_case}, assumption \eqref{crecimientoderivadasdinamica_caso_discreto} and the isometry \eqref{isometry}.
\end{proof} \smallskip

As a corollary of the first assertion in the previous lemma, we obtain the following result. 
\begin{lemma}\label{metric_regularity_discrete_time_case} For every $(x,u) \in X$ and $\delta \in \Pi_{k=0}^{N} X_{k}$ there exists a unique $z \in \Pi_{k=0}^{N} X_{k}$ such that $Dg(x,u)(z,0)= \delta$. Moreover, there exists $c>0$, independent of $(x,u,z,\delta)$, such that 
\begin{equation}\label{uniform_surjectivity_discrete_time}
\sum_{k=0}^{N} \| z_k\|_{X_k} \leq c \sum_{k=0}^{N} \| \delta_k\|_{X_k}.
\end{equation}
In particular, for every closed set $\V\subseteq \Pi_{k=0}^{N-1}  \left(L^{2}_{\F_k}\right)^m$ we have that
\begin{equation}\label{calmness_property_stochastic_case_discrete_time}
d\left((x,u), g^{-1}(y) \cap \left( \Pi_{k=0}^{N} X_{k} \cap \V \right) \right) \leq c \hspace{0.3cm} \forall \; (x,u) \in X, \; \; y \in \Pi_{k=0}^{N} X_{k}.
\end{equation}
\end{lemma}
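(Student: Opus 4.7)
The first claim, existence, uniqueness, and the bound \eqref{uniform_surjectivity_discrete_time}, is essentially an explicit calculation. My plan is to solve $Dg(x,u)(z,0)=\delta$ recursively: the zeroth equation from \eqref{gateaux_derivative_discrete_case} forces $z_0=-\delta_0 \in X_0=\R^n$, and then for $k=0,\dots,N-1$ the $(k+1)$-st equation can be read as a formula for $z_{k+1}$, namely
\[
z_{k+1}\;=\;b_{x}(k,x_k,u_k)z_k\;+\;\sum_{i=1}^{d}\sigma^{i}_{x}(k,x_k,u_k)z_k\, w_{k+1}^{i}\;-\;\delta_{k+1}.
\]
Since $z_k$ is $\F_k$-measurable (by induction) and the coefficients $b_x(k,x_k,u_k)$, $\sigma^i_x(k,x_k,u_k)$ are $\F_k$-measurable as well, the right-hand side is of the form $y_{k}^{0}+\sum_{i=1}^d y_k^i w_{k+1}^i$ with $\F_k$-measurable coefficients (using that $\delta_{k+1}\in X_{k+1}$ has such a decomposition), so $z_{k+1}\in X_{k+1}$. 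Uniqueness is obvious from the recursion.

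For the bound, I would apply the isometry \eqref{isometry} together with the uniform bounds $|b_x|,|\sigma^i_x|\le c_1$ from \eqref{crecimientoderivadasdinamica_caso_discreto}. A direct computation gives
\[
\|z_{k+1}\|_{X_{k+1}}^{2}\;\le\;C\bigl(\|z_k\|_{X_k}^{2}+\|\delta_{k+1}\|_{X_{k+1}}^{2}\bigr)
\]
for a constant $C$ depending only on $c_1$ and $d$. Iterating this estimate $N$ times (a discrete Gronwall/Jensen argument) yields $\|z_k\|_{X_k}\le C'\sum_{j=0}^{k}\|\delta_j\|_{X_j}$ for a constant $C'$ independent of $(x,u)$ and $\delta$, from which \eqref{uniform_surjectivity_discrete_time} follows after summing in $k$ and absorbing constants. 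I do not expect any real obstacle here; the point is simply that the linearized dynamics are bounded uniformly in $(x,u)$.

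For the calmness claim \eqref{calmness_property_stochastic_case_discrete_time} (which should read with a factor $\Vert g(x,u)-y\Vert$ on the right-hand side), the plan is to invoke Theorem \ref{thm3} with $C:=\Pi_{k=0}^{N}X_{k}\times \V$. Since $\Pi_{k=0}^{N}X_{k}$ is the full ambient space, Lemma \ref{lemmaP} gives $K(C,(x,u))=\Pi_{k=0}^{N}X_{k}\times K(\V,u)$, and in particular every pair of the form $(z,0)$ with $z\in \Pi_{k=0}^{N}X_{k}$ belongs to $K(C,(x,u))$ (because $0\in K(\V,u)$ as $u\in \V$). The first part of the lemma produces, for each $\delta\in \Pi_{k=0}^{N}X_{k}$ with $\sum_k\|\delta_k\|_{X_k}\le 1$, an element $(z,0)\in K(C,(x,u))$ with $Dg(x,u)(z,0)=\delta$ and $\|(z,0)\|_{X}=\sum_k\|z_k\|_{X_k}\le c$. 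This is exactly the qualification condition $({\bf H_{cq}})$ with constant $\alpha=c$, valid \emph{uniformly on the whole space} $X$ with arbitrary radius $r$. Applying Theorem \ref{thm3} with $r$ arbitrarily large (so that $D_{r_1,r_2}$ exhausts all $(x,u,y)$) then gives
\[
d\bigl((x,u),\,g^{-1}(y)\cap (\Pi_{k=0}^{N}X_k\times\V)\bigr)\;\le\;c\,\Vert g(x,u)-y\Vert
\]
for every $(x,u)\in X$ and $y\in \Pi_{k=0}^{N}X_k$. The only mildly delicate point is checking that Lemma \ref{lemmaP} indeed yields the equality $K(C,(x,u))=\Pi_{k=0}^{N}X_k\times K(\V,u)$ required to place $(z,0)$ in the contingent cone, but this is immediate since $\Pi_{k=0}^{N}X_k$ is trivially tangentially regular (its contingent cone at any point is the whole space).
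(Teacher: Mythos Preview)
Your proposal is correct and follows essentially the same approach as the paper: the recursive formula for $z$, the one-step estimate $\|z_{k+1}\|_{X_{k+1}}^{2}\le C(\|z_k\|_{X_k}^{2}+\|\delta_{k+1}\|_{X_{k+1}}^{2})$ obtained from the isometry \eqref{isometry} and the bounds in \eqref{crecimientoderivadasdinamica_caso_discreto}, and then the appeal to Theorem~\ref{thm3} via the uniform surjectivity bound. Your observation that \eqref{calmness_property_stochastic_case_discrete_time} is missing the factor $\|g(x,u)-y\|$ on the right-hand side is also correct; compare Corollary~\ref{calmness_G_stochastic_control_continuous_time}.
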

\begin{proof} The unique $z\in \Pi_{k=0}^{N} X_k$ such that $Dg(x,u)(z,0)=\delta$ is given recursively by 
$$\begin{array}{rcl}
z_0&=& -\delta_0 \\[6pt]
z_{k+1}&=& b_x(k,x_k,u_k)z_k + \sum_{i=1}^{d}\sigma_x^i(k,x_k,u_k)z_k w_{k+1}^i -\delta_{k+1} \hspace{0.6cm} \forall \;  k=0,\hdots, N-1.
\end{array}
$$
Noting that \small
$$\begin{array}{rcl}
\|z_{k+1}\|_{X_{k+1}}^2=\EE\left(|z_{k+1}|^2\right) &\leq& (d+2)\left[c_1^2 \EE\left(|z_k|^2\right)+c_1^2 \sum_{i=1}^{d}\EE\left(|z_{k}|^2(w_{k+1}^i)^2\right)+|\delta_{k+1}|^2 \right], \\[6pt]
\; & \leq  & (d+2)^2c_1^2\EE\left(|z_k|^2\right)+ (d+2)\EE(|\delta_{k+1}|^2),\\[6pt]
\; & \leq  & \bar{c} \left[ \EE\left(|z_k|^2\right)+ \EE(|\delta_{k+1}|^2)\right]\\[6pt]
\; & \leq & (N+1)\bar{c}^{N+1} \sum_{k=0}^{N}\EE\left(|\delta_k|^2\right)\\[6pt]
\; &=&(N+1)\bar{c}^{N+1} \sum_{k=0}^{N}\|\delta_k\|_{X_k}^2 ,
\end{array}
$$\normalsize
where $\bar{c}:=(d+2)^2(c_1^2 + 1)>1$ and the last equality is a consequence of \eqref{isometry}. This proves \eqref{uniform_surjectivity_discrete_time}. Relation \eqref{calmness_property_stochastic_case_discrete_time} follows directly from \eqref{uniform_surjectivity_discrete_time} and Theorem \ref{thm3}.
\end{proof} \smallskip
%

Let us define the Hamiltonian $H:[0: N-1]\times \R^n \times \R^n \times \R^{n\times d} \times \R^m \to \R$ by
$$
H(k,x,p,q,u):=\ell(k,x,u) +p  \cdot b(k,x,u) +\sum_{i=1}^{d}q^{i} \cdot \sigma^{i}(k,x,u)
$$
We have now all the elements to establish the optimality system for problem $(SP_d)$. 
\begin{theorem}\label{optimality_system_discrete_case} Suppose that $(\bar{x},\bar{u})$ is a local solution to $(SP_d)$. Then, there exist  $p \in \Pi_{k=0}^{N-1} (L^{2}_{\F_{k}})^n$, $q \in \Pi_{k=0}^{N-1} (L^{2}_{\F_{k}})^{n\times d}$ such that  
\begin{equation}\label{identidad_adjunto_discreto}
\begin{array}{rcl}
p_{k-1}&=& \EE\left( \nabla_{x} H(k,\bar{x}_{k},p_{k},q_{k},\bar{u}_{k}) | \F_{k-1}\right)\hspace{0.2cm} \forall \; k\in [1:N-1] \\[6pt]
q_{k-1}^{i}&=& \EE\left( \nabla_{x} H(k,\bar{x}_{k},p_{k},q_{k},\bar{u}_{k}) w_{k}^i | \F_{k-1}\right) \hspace{0.2cm} \forall \; k\in [1:N-1], \; i \in [1:d]  \\[4pt]
p_{N-1} &=&\EE \left( \nabla \Phi(\bar{x}_N) | \F_{N-1} \right)\\[4pt]
q_{N-1}^i &=&\EE \left( \nabla \Phi(\bar{x}_N) w_{N}^{i}| \F_{N-1} \right) \hspace{0.2cm} \forall \; i \in [1:d],
\end{array}
\end{equation}  
and 
\begin{equation}\label{optimality_condition_u_discrete}
\EE\left( \sum_{k=1}^{N-1} \nabla_{u}H(k,\bar{x}_k,\bar{p}_{k},\bar{q}_{k},\bar{u}_k) \cdot  v_k  \right) \geq 0  \hspace{0.3cm} \forall \; v \in T_{\U}(\bar{u}).
\end{equation}
If in addition    $K(\U,\bar{u})$  is convex, then \eqref{optimality_condition_u_discrete} holds for all $v\in K(\U,\bar{u})$.
\end{theorem}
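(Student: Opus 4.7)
The plan is to apply the abstract Lagrange multiplier result of Theorem \ref{thm2} to the reformulation of $(SP_d)$ and then translate the resulting stationarity condition, through the coordinate decomposition of elements of $X_k$, into the adjoint equations \eqref{identidad_adjunto_discreto} and the minimum principle \eqref{optimality_condition_u_discrete}. First, Lemma \ref{gateaux_differentiability_dynamics_discrete_time} ensures that $f$ and $g$ satisfy ${\bf (H_f)}$ and ${\bf (H_g)}$ at $(\bar{x},\bar{u})$ with the space $X=\Pi_{k=0}^N X_k \times \Pi_{k=0}^{N-1}(L^2_{\F_k})^m$, and Lemma \ref{metric_regularity_discrete_time_case} (applied to $\V=\U$) yields the calmness of the system $g(x,u)=0$, $u\in\U$ at $(\bar{x},\bar{u})$. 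Theorem \ref{thm2}(ii) (with $C=\Pi_{k=0}^N X_k \times \U$) then produces a multiplier $\lambda=(\lambda_0,\ldots,\lambda_N)\in \Pi_{k=0}^N X_k$ such that
$$
D_xf(\bar{x},\bar{u})+D_xg(\bar{x},\bar{u})^{\ast}\lambda=0, \qquad \langle D_uf(\bar{x},\bar{u})+D_ug(\bar{x},\bar{u})^{\ast}\lambda,v\rangle\geq 0 \;\;\forall\, v\in T_{\U}(\bar{u}).
$$

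Next I would decompose each $\lambda_{k+1}\in X_{k+1}$ as $\lambda_{k+1}=p_k+\sum_{i=1}^d q_k^i w_{k+1}^i$ with $p_k\in(L^2_{\F_k})^n$ and $q_k^i\in(L^2_{\F_k})^n$, which is possible by the very definition of $X_{k+1}$, and show that the $p_k$'s and $q_k^i$'s are exactly the adjoint processes. Using the explicit formulas \eqref{gateaux_derivative_discrete_case} and \eqref{gateaux_differentiability_cost_discrete_time_expression} together with the fact that the inner product of $X_{k+1}$ coincides with $\langle\cdot,\cdot\rangle_{L^2(\Omega)}$ on $X_{k+1}$ (this is precisely the isometry \eqref{isometry}), the first identity above reads, for every $z\in\Pi_{k=0}^N X_k$,
$$
\EE\Bigl(\sum_{k=0}^{N-1}\ell_x(k,\bar{x}_k,\bar{u}_k)z_k+D\Phi(\bar{x}_N)z_N\Bigr)+\sum_{k=0}^{N-1}\EE\bigl(\lambda_{k+1}\cdot[b_x z_k+\textstyle\sum_i\sigma_x^i z_k w_{k+1}^i]\bigr)-\sum_{k=0}^N\EE(\lambda_k\cdot z_k)=0,
$$
where the arguments of $b_x,\sigma_x^i$ are $(k,\bar{x}_k,\bar{u}_k)$.

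The main computational step, which I expect to be the bulk of the work, is to evaluate these inner products using the independence of $w_{k+1}$ from $\F_k$, together with $\EE(w_{k+1}^i)=0$ and $\EE(w_{k+1}^iw_{k+1}^j)=\delta_{ij}$. These give
$$
\EE(\lambda_{k+1}\cdot b_x z_k)=\EE(p_k\cdot b_x z_k),\qquad \EE(\lambda_{k+1}\cdot\sigma_x^i z_k w_{k+1}^i)=\EE(q_k^i\cdot\sigma_x^i z_k),
$$
so that the coefficient of $z_j$ ($1\leq j\leq N-1$) is $\nabla_xH(j,\bar{x}_j,p_j,q_j,\bar{u}_j)$. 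Testing with $z$ supported at index $j$ and written as $z_j=r^0_{j-1}+\sum_i r^i_{j-1}w_j^i$ with arbitrary $r^m_{j-1}\in(L^2_{\F_{j-1}})^n$, and identifying the coefficients of $r^0_{j-1}$ and $r^i_{j-1}$ through the tower property of conditional expectation, yields the first two lines of \eqref{identidad_adjunto_discreto}. The choice $j=N$ gives, by the same argument applied to $\EE(D\Phi(\bar{x}_N)\cdot z_N)=\EE(\lambda_N\cdot z_N)$, the two terminal identities for $p_{N-1}$ and $q_{N-1}^i$.

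Finally, for the variational inequality, the same manipulation of $\langle\lambda,D_ug(\bar{x},\bar{u})v\rangle$ shows that for every $v\in T_{\U}(\bar{u})$
$$
0\leq \EE\Bigl(\sum_{k=0}^{N-1}[\ell_u+b_u^T p_k+\textstyle\sum_i(\sigma_u^i)^T q_k^i]\cdot v_k\Bigr)=\EE\Bigl(\sum_{k=0}^{N-1}\nabla_uH(k,\bar{x}_k,p_k,q_k,\bar{u}_k)\cdot v_k\Bigr),
$$
which is \eqref{optimality_condition_u_discrete}. When $K(\U,\bar{u})$ is convex, the last assertion of Theorem \ref{thm2}(ii) gives $-D_uf(\bar{x},\bar{u})-D_ug(\bar{x},\bar{u})^{\ast}\lambda\in(K(\U,\bar{u}))^0$, and an identical manipulation yields the same inequality for every $v\in K(\U,\bar{u})$, completing the proof. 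The only delicate point is to handle the $w_{k+1}^i$-components in $\lambda_{k+1}$ when contracting with $Dg(\bar{x},\bar{u})(z,0)$; independence of the noise across time makes this a routine but careful conditional-expectation bookkeeping.
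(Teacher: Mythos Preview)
Your proposal is correct and follows essentially the same route as the paper's proof: apply Theorem \ref{thm2} (via the regularity of $f,g$ from Lemma \ref{gateaux_differentiability_dynamics_discrete_time} and the calmness from Lemma \ref{metric_regularity_discrete_time_case}) to obtain a multiplier $\lambda\in\Pi_{k=0}^N X_k$, decompose each $\lambda_{k+1}=p_k+\sum_i q_k^i w_{k+1}^i$, and then read off the adjoint recursions and the variational inequality by testing the stationarity condition against suitable $z_j$. The only cosmetic difference is that the paper computes the pairings $\langle\lambda_{k+1},\cdot\rangle_{X_{k+1}}$ directly from the coordinate definition of the $X_{k+1}$-inner product, whereas you pass to $L^2(\Omega)$ via the isometry \eqref{isometry} and then invoke independence of $w_{k+1}$ from $\F_k$; both computations are identical line by line.
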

\begin{proof} By Lemma \ref{gateaux_differentiability_dynamics_discrete_time}  and  Theorem \ref{thm2} there exists $\lambda \in \Pi_{k=0}^{N} X_k$ such that 
$$
(0,0) \in Df(\bar{x},\bar{u}) + Dg(\bar{x},\bar{u})^{\ast}\lambda + \{0\}\times N_{\U}(\bar{u}),
$$
from which we deduce that for all $z=(z_0,\hdots, z_{N}) \in  \Pi_{k=0}^{N} X_k$
\begin{equation}\label{lagrange_abstract_discrete_problem}
\begin{array}{l}
D_{x_k} f(\bar{x},\bar{u})z_k + \sum_{j=0}^{N} \langle \lambda_j, D_{x_k} g_j(\bar{x},\bar{u})z_k \rangle_{X_j}= 0 \hspace{0.3cm} \; \forall \; k=0,\hdots, N,  \\[6pt]
D_u f(\bar{x},\bar{u})v + \sum_{k=1}^{N} \langle \lambda_k, D_u g_{k}(\bar{x},\bar{u})v \rangle_{X_k}\geq 0  \hspace{0.3cm} \forall v \in T_\U(\bar{u}). 
\end{array}
\end{equation}
Lemma \ref{gateaux_differentiability_dynamics_discrete_time}  and the first equation in \eqref{lagrange_abstract_discrete_problem} imply that for all $k=1,\hdots,N-1$
\begin{equation}\label{optimality_equation_with_lambda_k}
\begin{array}{c}
\EE\left(\ell_{x}(k,\bar{x}_k,\bar{u}_k)z_k\right)+ \left\langle \lambda_{k+1}, b_{x}(k,\bar{x}_k,\bar{u}_k)z_k+\sum_{i=1}^{d}w_{k+1}^i\sigma_{x}^{i}(k,\bar{x}_k,\bar{u}_k)z_k \right\rangle_{X_{k+1}}\\[8pt]
\hspace{2cm} =\langle \lambda_k,z_k\rangle_{X_k},\\[8pt]
\EE\left(\Phi_x(\bar{x}_N)z_N\right) =\langle\lambda_{N},z_N \rangle_{X_N}.
\end{array}
\end{equation}
Setting 
$$z_{k}=y_{k-1}^0 + \sum_{i=1}^{d}y_{k-1}^i w_{k}^i \in X_k, \; \; \lambda_{k}=p_{k-1} + \sum_{i=1}^{d}q_{k-1}^i w_{k}^i \in X_k, $$
relation \eqref{optimality_equation_with_lambda_k} yields \small
\begin{equation}\label{intermediate_equation_1_discrete}
\begin{array}{l}
\EE\left(  \nabla_{x} H(k+1,\bar{x}_{k+1},p_{k+1},q_{k+1},\bar{u}_{k+1}) \cdot z_k  \right)= \EE\left(p_{k-1}\cdot y_{k-1}^0+\sum_{i=1}^{d}q_{k-1}^{i}\cdot y_{k-1}^i  \right),\\[6pt]
\hspace{3.67cm}\EE\left(\nabla \Phi(\bar{x}_N)\cdot z_N\right) = \EE\left(  p_{N-1}\cdot y_{N-1}^0+\sum_{i=1}^{d}q_{N-1}^{i}\cdot y_{N-1}^i \right).
\end{array}
\end{equation} \normalsize
Taking $y_{k-1}^{i}= 0$ for all $i \in [1:d]$ the first equation in \eqref{intermediate_equation_1_discrete} gives 
$$
\EE\left(  \nabla_{x} H(k+1,\bar{x}_{k+1},p_{k+1},q_{k+1},\bar{u}_{k+1}) \cdot y_{k-1}^0  \right)= \EE\left(p_{k-1}\cdot y_{k-1}^0\right),
$$
and so, since $y_{k-1}^0\in L^2_{\F_{k-1}}$ is arbitrary, by definition of conditional expectation w.r.t. $\F_{k-1}$, the first equality in 
\eqref{identidad_adjunto_discreto} follows. Similarly, fixing $\bar{i}\in [1:d]$ and letting $y_{k-1}^{i}=0$ for all $i \in [0:d] \setminus \{\bar{j}\}$, we obtain the second relation \eqref{identidad_adjunto_discreto} for $i=\bar{i}$. The last two relations in  \eqref{identidad_adjunto_discreto} follow by an analogous argument.

Finally, since for all $k=0,\hdots, N-1$,
$$
\langle \lambda_{k+1}, D_u g_{k+1}(\bar{x},\bar{u})v \rangle_{X_{k+1}}=\EE\left(p_{k}\cdot b_{u}(k,\bar{x}_k, \bar{u}_k)v_k +\sum_{i=1}^{d}q_{k}^{i} \cdot \sigma_{u}^{i}(k,\bar{x}_k, \bar{u}_k)v_k \right),
$$
relation \eqref{optimality_condition_u_discrete} follows directly from the second relation in \eqref{lagrange_abstract_discrete_problem} and Lemma \ref{gateaux_differentiability_dynamics_discrete_time}{\rm(ii)}.  If $K(\U,\bar{u})$  is convex then Theorem \ref{thm2} ensures that the second relation in \eqref{lagrange_abstract_discrete_problem} holds for all $v\in K(\U,\bar{u})$, from which the last assertion of the theorem easily follows. 
\end{proof}
\begin{remark}{\rm(i)} The optimality system \eqref{identidad_adjunto_discreto}-\eqref{optimality_condition_u_discrete} has been first shown in {\rm\cite{LinZhang}} under more restrictive assumptions on $\ell$, $\Phi$, $f$, $\sigma$ {\rm(}see {\rm\cite[Equation (9)]{LinZhang}}{\rm)}  and the control constraint set $\U$ {\rm(}see {\rm\cite[Section 3]{LinZhang})}. The results in Sections \ref{optimality_conditions_abstract_form} and \ref{metric_regularity} allow us to  prove a more general result in a quite direct manner. \smallskip\\
{\rm(ii)} Similarly to the continuous case {\rm(}see Section \ref{section_stochastic_control_continuous_time}{\rm)}, it is easy to extend the results in this section to the case where the initial state $\hat{x}_0$ is a decision variable subject to the constraint  $\hat{x}_0 \in \mathcal{X}_0$, where $\mathcal{X}_0$ is a closed subset of $\R^n$. In this case, the optimality system is as in Theorem \ref{optimality_system_discrete_case} with the additional constraint  on the adjoint state (called transversality condition) $-p_0 \in  N_{\mathcal{X}_0}(\hat{x}_0)$. 
\end{remark}

\end{document}